\def\part#1{\frac{\partial\phantom{#1}}{\partial#1}}
\newtheorem{thm}{Theorem}
\newtheorem{proposition}[thm]{Proposition}
\newtheorem{lemma}[thm]{Lemma}
\newtheorem{corollary}[thm]{Corollary}
\newenvironment{proof}{\begin{trivlist}\item[]{\bf Proof} }%
{\hfill $\Box$ \end{trivlist}}
\newenvironment{definition}{\begin{trivlist}\item[]{\bf Definition}\em }%
{\end{trivlist}}
\newenvironment{remark}{\begin{trivlist}\item[]{\bf Remark} }%
{\end{trivlist}}
\newenvironment{example}{\begin{trivlist}\item[]{\bf Example} }%
{\end{trivlist}}
\newenvironment{question}{\begin{trivlist}\item[]{\bf Question} }%
{\end{trivlist}}
\def\Z{\ifmmode{{\mathbb Z}}\else{${\mathbb Z}$}\fi}
\def\Q{\ifmmode{{\mathbb Q}}\else{${\mathbb Q}$}\fi}
\def\C{\ifmmode{{\mathbb C}}\else{${\mathbb C}$}\fi}
\def\P{\ifmmode{{\mathbb P}}\else{${\mathbb P}$}\fi}
\def\H{\ifmmode{{\mathrm H}}\else{${\mathrm H}$}\fi}
\def\B{\ifmmode{{\mathcal B}}\else{${\mathcal B}$}\fi}
\def\E{\ifmmode{{\mathcal E}}\else{${\mathcal E}$}\fi}
\def\F{\ifmmode{{\mathcal F}}\else{${\mathcal F}$}\fi}
\def\K{\ifmmode{{\mathcal K}}\else{${\mathcal K}$}\fi}
\def\L{\ifmmode{{\mathcal L}}\else{${\mathcal L}$}\fi}
\def\M{\ifmmode{{\mathcal M}}\else{${\mathcal M}$}\fi}
\def\N{\ifmmode{{\mathcal N}}\else{${\mathcal N}$}\fi}
\def\O{\ifmmode{{\mathcal O}}\else{${\mathcal O}$}\fi}
\def\U{\ifmmode{{\mathcal U}}\else{${\mathcal U}$}\fi}
\def\V{\ifmmode{{\mathcal V}}\else{${\mathcal V}$}\fi}
\def\X{\ifmmode{{\mathcal X}}\else{${\mathcal X}$}\fi}
\def\Br{\ifmmode{{\mathrm{Br}}}\else{${\mathrm{Br}}$}\fi}
\def\OG{\ifmmode{\widetilde{\cal M}_4}\else{$\widetilde{\cal M}_4$}\fi}
\def\D{\ifmmode{{\mathcal D}^b}\else{${{\mathcal
    D}^b}$}\fi}
\def\Shah{\ifmmode{\amalg\hspace*{-3.5pt}\amalg}\else{$\amalg\hspace*{-3.5pt}\amalg$}\fi}
\begin{document}

\title{Isotrivial elliptic K3 surfaces and Lagrangian fibrations\footnote{2010 {\em Mathematics Subject
  Classification.\/} 14D06, 14J28, 53C26.}}
\author{Justin Sawon}
\date{May, 2014}
\maketitle

\begin{abstract}
A fibration is said to be {\em isotrivial\/} if all of its smooth fibres are isomorphic to a single fixed variety. We classify the elliptic K3 surfaces that are isotrivial, and use them to construct Lagrangian fibrations that are isotrivial. We then modify the construction to produce new examples of holomorphic symplectic orbifolds, that also admit isotrivial Lagrangian fibrations.
\end{abstract}

\maketitle

\section{Introduction}

The purpose of this article is to investigate isotrivial Lagrangian fibrations. A {\em Lagrangian fibration\/} is a fibration $\pi:X\rightarrow\P^n$ of a $2n$-dimensional holomorphic symplectic manifold $X$ whose smooth fibres are $n$-dimensional abelian varieties that are Lagrangian with respect to the holomorphic symplectic structure. In~\cite{sawon12} the author proved that there are finitely many Lagrangian fibrations up to deformation, under some additional natural hypotheses. One of these hypotheses was that the fibration is a maximal variation of abelian varieties; at the other extreme we have isotrivial fibrations, where the fibres do not vary at all.

\begin{definition}
A fibration $\pi:X\rightarrow B$ is said to be isotrivial if all of its smooth fibres are isomorphic to a single fixed variety $F$.
\end{definition}

Denote by $\Delta\subset B$ the discriminant locus, which parameterizes singular fibres. Each point $b\in B\backslash\Delta$ will be contained in a small neighbourhood $U$ (in the analytic topology) over which the fibration is trivial, isomorphic to $U\times F$; it suffices to choose $U$ to be simply connected, for then there can be no monodromy.

For an elliptic fibration, being isotrivial is the same as having constant $j$-function. We begin by studying isotrivial elliptic K3 surfaces. Some of these arise as surfaces of Kummer type, i.e., resolutions of orbifolds $T/G$ where $G$ is a finite subgroup of $\mathrm{SU}(2)$ acting on a two-dimensional complex torus $T$. However, to find all isotrivial elliptic K3 surfaces we need to use Weierstrass models.

In higher dimensions, we produce isotrivial Lagrangian fibrations by taking the Hilbert schemes of isotrivial elliptic K3 surfaces. Again, some of these arise as resolutions of orbifolds $T/\Gamma$ where $\Gamma$ is now a finite subgroup of $\mathrm{Sp}(n)$ acting on a $2n$-dimensional complex torus $T$. We then describe two methods for constructing new examples: modifying the $\Gamma$-action on $T$ and restricting the action to a subgroup $\Gamma^{\prime}<\Gamma$ (the latter idea comes from Matsushita~\cite{matsushita01}). Using these methods, we succeed in producing new holomorphic symplectic orbifolds $X$, with isotrivial Lagrangian fibrations $\pi:X\rightarrow\P^n$, in all dimensions six and greater. Unfortunately none of our examples admits a symplectic desingularization.

The author would like to thank Remke Kloosterman, Matthias Sch{\"u}tt, Ivan Smith, and Misha Verbitsky for helpful suggestions. The author gratefully acknowledges support from the NSF, grant number DMS-1206309.

\section{Elliptic K3 surfaces}

\subsection{Singular fibres}

A general reference for elliptic fibrations on surfaces is Barth et al.~\cite{bhpv04}. An isotrivial elliptic fibration will have constant $j$-function. Suppose we have an isotrivial elliptic fibration over the unit disc, with a singular fibre above $0$. By the Stable Reduction Theorem, a base change of finite order will produce a fibration whose fibre above $0$ is of Kodaira type $I_0$ (a smooth elliptic curve) or $I_k$ (a cycle of $k\geq 1$ rational curves). Since the latter would correspond to a pole of the $j$-function, it cannot occur if the $j$-function is constant. Therefore Stable Reduction produces a smooth fibration, indeed a trivial fibration, and the monodromy of the original fibration must be of finite order. The table below lists all Kodaira types of singular fibres with monodromy of finite order. Note that the monodromy is only determined up to conjugation by an element of $\mathrm{SL}(2,\mathbb{Z})$.

\begin{center}
\begin{tabular}{|c|c|c|c|c|}
  \hline
  Kodaira type & Dynkin diagram & Euler number & monodromy & order \\
  \hline
  $II$ cusp & $\tilde{A}_0$ & 2 & $\alpha=\left(\begin{array}{cc} 1 & 1 \\ -1 & 0 \end{array}\right)$ & 6 \\
  $III$ tacnode & $\tilde{A}_1$ & 3 & $\beta=\left(\begin{array}{cc} 0 & 1 \\ -1 & 0 \end{array}\right)$ & 4 \\
  $IV$ triple point & $\tilde{A}_2$ & 4 & $\alpha^2=\left(\begin{array}{cc} 0 & 1 \\ -1 & -1\end{array}\right)$ & 3 \\
  $I^*_0$ & $\tilde{D}_4$ & 6 & $\alpha^3=\beta^2=\left(\begin{array}{cc} -1 & 0 \\ 0 & -1 \end{array}\right)$ & 2 \\
  $II^*$ & $\tilde{E}_8$ & 10 & $\alpha^5=\left(\begin{array}{cc} 0 & -1 \\ 1 & 1 \end{array}\right)$ & 6 \\
  $III^*$ & $\tilde{E}_7$ & 9 & $\beta^3=\left(\begin{array}{cc} 0 & -1 \\ 1 & 0 \end{array}\right)$ & 4 \\
  $IV^*$ & $\tilde{E}_6$ & 8 & $\alpha^4=\left(\begin{array}{cc} -1 & -1 \\ 1 & 0 \end{array}\right)$ & 3 \\
  \hline
\end{tabular}
\end{center}

\subsection{K3 surfaces of Kummer type}

The `classical' Kummer surface is given by quotienting an abelian surface by the group $\mathbb{Z}/2\mathbb{Z}$ acting by multiplication by $-1$, and resolving the sixteen singular points. Similar examples can be obtained from cyclic groups of orders 3, 4, and 6. These examples all admit isotrivial elliptic fibrations.

\begin{example}
Let $E$ be an elliptic curve and construct a Kummer surface $S$ from $E\times E$. Recall that $S$ is obtained by blowing up the sixteen $A_1$ singularities of $E\times E/(\mathbb{Z}/2\mathbb{Z})$. Projecting onto, say, the second factor induces a morphism
$$S\longrightarrow E\times E/(\mathbb{Z}/2\mathbb{Z})\longrightarrow E/(\mathbb{Z}/2\mathbb{Z})\cong\P^1$$
which makes $S$ into an isotrivial elliptic fibration. This fibration has four singular fibres, sitting above the branch points of $E\rightarrow\P^1$, and each singular fibre consists of a central
$$E/(\mathbb{Z}/2\mathbb{Z})\cong\P^1$$
with multiplicity two plus four additional $\P^1$s touching the central $\P^1$ at the four branch points (these additional $\P^1$s come from the resolutions of the sixteen $A_1$ singularities). In other words, $S\rightarrow\P^1$ has four singular fibres of type $I^*_0$. Note that each singular fibre has Euler number 6, so the Euler number of $S$ will be $4\times 6=24$ as required.
\end{example}

\begin{remark}
More generally, $E\times E$ can be replaced by any two-dimensional complex torus $A$ that is fibred over an elliptic curve $E$. If $E\cong\C/\langle 1,\tau\rangle$, then such a torus can be written
$$A\cong\mathbb{C}^2/\langle (1,0),(\sigma,0),(a,1),(b,\tau)\rangle,$$
and projection onto the second coordinate induces $A\rightarrow E$, with all fibres isomorphic to $F\cong\mathbb{C}/\langle 1,\sigma\rangle$. In fact, there is an isomorphism
\begin{eqnarray*}
\mathbb{C}^2/\langle (1,0),(\sigma,0),(a,1),(b,\tau)\rangle & \longrightarrow & \mathbb{C}^2/\langle (1,0),(\sigma,0),(0,1),(b-a\tau,\tau)\rangle \\
(z,w) & \longmapsto & (z-aw,w),
\end{eqnarray*}
so we can normalize by assuming that $a=0$. Then $A$ is clearly isomorphic to $F\times E$ if and only if $b\equiv 0\;(\mathrm{mod}\; 1,\sigma)$.
%Note that $A$ is isomorphic to $F\times E$ if and only if $a\tau\equiv b\;(\mathrm{mod}\; 1,\sigma)$, the isomorphism being given by
%\begin{eqnarray*}
%F\times E\cong\mathbb{C}/\langle 1,\sigma\rangle\times\mathbb{C}/\langle 1,\tau\rangle & \longrightarrow & A\cong\mathbb{C}^2/\langle (1,0),(\sigma,0),(a,1),(b,\tau)\rangle \\
%(z,w) & \longmapsto & (z+aw,w)
%\end{eqnarray*}
%in that instance.
\end{remark}

\begin{example}
Let $\zeta=\frac{1+\sqrt{3}i}{2}$ be a primitive sixth root of unity and let $E$ be the elliptic curve $\mathbb{C}/\langle 1,\zeta\rangle$, which has automorphism group of order six generated by multiplication by $\zeta$. The group
$$G:=\left\langle\left(\begin{array}{cc} \zeta^2 & 0 \\ 0 & \zeta^{-2} \end{array}\right)\right\rangle$$
of order three acts on $E\times E$ with nine fixed points. If we resolve the nine $A_2$ singularities of the quotient $E\times E/G$ we obtain a K3 surface $S$. There is a morphism
$$S\longrightarrow E\times E/G\longrightarrow E/\langle\zeta^2\rangle\cong\P^1$$
which makes $S$ into an isotrivial elliptic fibration. This fibration has three singular fibres, sitting above the branch points of $E\rightarrow\P^1$, and each singular fibre consists of a central
$$E/\langle\zeta^2\rangle\cong\P^1$$
with multiplicity three plus three additional chains of two $\P^1$s touching the central $\P^1$ at the three branch points (these additional chains of $\P^1$s come from the resolutions of the nine $A_2$ singularities). In other words, $S\rightarrow\P^1$ has three singular fibres of type $IV^*$. Note that each singular fibre has Euler number 8, so the Euler number of $S$ will be $3\times 8=24$ as required.
\end{example}

\begin{remark}
One could try to generalize this example by replacing $E\times E$ by a two-dimensional complex torus $A\cong\mathbb{C}^2/\langle (1,0),(\zeta,0),(0,1),(b,\zeta)\rangle,$ fibered over $E$ with fibres isomorphic to $E$. However, there exists a $G$-action only if $\zeta b\equiv 0\;(\mathrm{mod}\; 1,\zeta)$, which implies $A\cong E\times E$. (The same observation applies to the next two examples.)
\end{remark}

\begin{example}
Let $E$ be the elliptic curve $\mathbb{C}/\langle 1,i\rangle$, which has automorphism group of order four generated by multiplication by $i$. The group
$$G:=\left\langle\left(\begin{array}{cc} i & 0 \\ 0 & -i \end{array}\right)\right\rangle$$
of order four acts on $E\times E$ with four fixed points and twelve points with isotropy subgroup $\langle\pm I\rangle$. The quotient $E\times E/G$ therefore has four $A_3$ singularities and six $A_1$ singularities; resolving them yields a K3 surface $S$. There is a morphism
$$S\longrightarrow E\times E/G\longrightarrow E/\langle i\rangle\cong\P^1$$
which makes $S$ into an isotrivial elliptic fibration. This fibration has three singular fibres, sitting above the branch points of $E\rightarrow\P^1$. Two of these singular fibres consist of a central
$$E/\langle i\rangle\cong\P^1$$
with multiplicity four plus two additional chains of three $\P^1$s and one additional $\P^1$ touching the central $\P^1$ at the three branch points (coming from the resolutions of the four $A_3$ singularities and two of the $A_1$ singularities); these singular fibres are of type $III^*$. The third singular fibre is of type $I^*_0$, as in the first example, and accounts for the resolutions of the remaining four $A_1$ singularities. The singular fibres of type $III^*$ have Euler number 9, and the singular fibre of type $I^*_0$ has Euler number 6, so the Euler number of $S$ will be $2\times 9+6=24$ as required.
\end{example}

\begin{example}
Let $\zeta=\frac{1+\sqrt{3}i}{2}$ be a primitive sixth root of unity and let $E$ be the elliptic curve $\mathbb{C}/\langle 1,\zeta\rangle$, which has automorphism group of order six generated by multiplication by $\zeta$. The group
$$G:=\left\langle\left(\begin{array}{cc} \zeta & 0 \\ 0 & \zeta^{-1} \end{array}\right)\right\rangle$$
of order six acts on $E\times E$ with one fixed point. The subgroups
$$\left\langle\left(\begin{array}{cc} \zeta^2 & 0 \\ 0 & \zeta^{-2} \end{array}\right)\right\rangle\qquad\qquad\mbox{and}\qquad\qquad\left\langle\left(\begin{array}{cc} -1 & 0 \\ 0 & -1 \end{array}\right)\right\rangle$$
fix eight and fifteen additional points, respectively. The quotient $E\times E/G$ therefore has one $A_5$ singularity, four $A_3$ singularities, and five $A_1$ singularities; resolving them yields a K3 surface $S$. There is a morphism
$$S\longrightarrow E\times E/G\longrightarrow E/\langle\zeta\rangle\cong\P^1$$
which makes $S$ into an isotrivial elliptic fibration. This fibration has three singular fibres, sitting above the branch points of $E\rightarrow\P^1$. One of these singular fibres consists of a central
$$E/\langle\zeta\rangle\cong\P^1$$
with multiplicity six plus an additional chain of five $\P^1$s, an additional chain of three $\P^1$s, and one additional $\P^1$ touching the central $\P^1$ at the three branch points (coming from the resolutions of the $A_5$ singularity, one of the $A_3$ singularities, and one of the $A_1$ singularities); this singular fibre is of type $II^*$. A second singular fibre is of type $IV^*$, as in the second example, and accounts for the resolutions of the remaining three $A_3$ singularities. The third singular fibre is of type $I^*_0$, as in the first example, and accounts for the resolutions of the remaining four $A_1$ singularities. The singular fibre of type $II^*$ has Euler number 10, the singular fibre of type $IV^*$ has Euler number 8, and the singular fibre of type $I^*_0$ has Euler number 6, so the Euler number of $S$ will be $10+8+6=24$ as required.
\end{example}

\begin{remark}
The singular fibres in the above examples are all of starred type ($I^*_0$, $II^*$, $III^*$, $IV^*$); they appear after quotienting by a finite group and then blowing-up singular points. Singular fibres of non-starred type do not appear in K3 surfaces of Kummer type because blow-downs are also required to produce them~\cite{bhpv04}; we will deal with them in the next subsection. Note that the above examples exhibit all possible configurations of singular fibres of starred type for elliptic K3 surfaces. This follows from an Euler number calculation, as
$$6+6+6+6,\qquad 8+8+8,\qquad 9+9+6,\qquad\mbox{and}\qquad 10+8+6$$
are the only partitions of 24 into summands of 6, 8, 9, and 10. By contrast, Miranda and Persson~\cite{mp89} classified the configurations of semistable (i.e., type $I_n$) singular fibres that can occur in a non-isotrivial elliptic K3 surface, and there are hundreds of possibilities.
\end{remark}

\begin{remark}
Fujiki~\cite{fujiki88} gave a complete classification of finite groups acting on complex tori of dimension two. The subgroups of $\mathrm{SU}(2)$ that can act are precisely the cyclic groups of order 2, 3, 4, 6, the quaternion group, the binary dihedral group of order 12, and the binary tetrahedral group of order 24 (see Section~3 of~\cite{fujiki88}, particularly Lemma~3.3). For the latter three groups one uses quaternion algebras to construct the action. For example, denoting by $H$ the set of Hurwitz quaternions $H:=\mathbb{Z}[1,i,j,(1+i+j+k)/2]$, the automorphism group of the torus $T:=\mathbb{H}/H$ will be given by the group of units $H^{\times}$, which is isomorphic to the binary tetrahedral group.

Of course, if $G$ is a subgroup of $\mathrm{SU}(2)$ then $T/G$ will admit a crepant resolution, so Fujiki's classification yields all K3 surfaces of Kummer type. However, only the cyclic groups yield isotrivial elliptic K3 surfaces, and only for the complex tori and actions described above. In the other cases, the group actions do not preserve any fibration $T\rightarrow E$ on the torus $T$.
\end{remark}

Next we consider to what extent these configurations of singular fibres determine the K3 surface.

\begin{proposition}
Let $S\rightarrow\P^1$ be an isotrivial elliptic K3 surface with four singular fibres of type $I^*_0$. Then $S$ is the Kummer surface constructed from a two-dimensional complex torus $A$ that is fibred over an elliptic curve.
\end{proposition}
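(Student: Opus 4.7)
The plan is to apply a degree $2$ base change to trivialize the monodromy, identify the base-changed surface as a complex $2$-torus, and recover $S$ as the Kummer quotient. Let $E$ be the double cover of $\P^1$ branched at the four critical values of $\pi$; since $\P^1$ branched at $4$ points has genus $1$, $E$ is an elliptic curve. Let $X$ denote the minimal desingularization of the fibre product $S\times_{\P^1} E$, with induced fibration $X\to E$. Around each ramification point of $E\to\P^1$, the monodromy of $X\to E$ is the square of the $I_0^*$ monodromy $-I$, hence trivial. By the Stable Reduction Theorem and the Kodaira classification, each $I_0^*$ fibre pulls back to a smooth fibre; concretely, on a Weierstrass model $y^2=x^3+As^2x+Bs^3$ for an $I_0^*$ neighbourhood, the substitution $s=u^2$, $x=u^2X$, $y=u^3Y$ yields $Y^2=X^3+AX+B$, a smooth constant family. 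So $X\to E$ is a smooth isotrivial elliptic fibration with every fibre isomorphic to $F$.

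Next I claim $X$ is a complex $2$-torus. Since $X\to E$ is smooth and isotrivial, its structure group reduces to the translations of $F$, making $X$ a holomorphic principal $F$-bundle over $E$. The universal cover of $X$ is then a holomorphic principal $\C$-bundle over $\C$, which is necessarily trivial and hence isomorphic to $\C^2$. The deck group $\pi_1(X)\cong\mathbb{Z}^4$ is generated by lifts of translations on base and fibre, so it acts on $\C^2$ by translations. Therefore $X=\C^2/\Lambda$ is a complex $2$-torus $A$, and $A\to E$ is induced by a $\C$-linear projection $\C^2\to\C$.

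Finally, the deck involution of $E\to\P^1$ lifts via the fibre product construction to a holomorphic involution $\iota$ of $A$. Any such involution of a $2$-torus has linear part diagonalizable with eigenvalues in $\{\pm 1\}$. Since $A/\iota$ is birational to $S$ and $S\to\P^1$ has four fibres of type $I_0^*$, the restriction of $\iota$ to each of the four fibres of $A$ above the branch points must act with $4$ isolated fixed points (otherwise the quotient would contribute a multiple smooth fibre, not $I_0^*$). This gives $\iota$ exactly $16$ isolated fixed points on $A$, ruling out linear part $+I$ (no isolated fixed points) and $\mathrm{diag}(+1,-1)$ (a fixed $1$-dimensional subtorus), and forcing the linear part to be $-I$. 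After translating coordinates we may assume $\iota(x)=-x$, so $A/\iota$ has $16$ nodes whose minimal resolution is, by definition, the Kummer surface of $A$. Being birational to the K3 surface $S$, this resolution is isomorphic to $S$. The main technical step is identifying $X$ with a complex $2$-torus; the rest is a direct fixed-point analysis.
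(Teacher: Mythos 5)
Your proposal follows essentially the same route as the paper: pass to the double cover of $\P^1$ branched at the four critical values (which is the monodromy cover, since each $I_0^*$ monodromy is exactly $-\mathrm{I}$), modify the pulled-back surface to get a smooth isotrivial elliptic fibration over the elliptic curve $E$, identify it with a complex torus $A$, and recover $S$ from $A$; your explicit Weierstrass computation and the final fixed-point analysis of the involution (forcing linear part $-\mathrm{I}$ and sixteen nodes) are welcome details that the paper leaves implicit. The one assertion that deserves a word is $\pi_1(X)\cong\mathbb{Z}^4$ with the deck group acting by translations: a smooth principal elliptic bundle over an elliptic curve with trivial monodromy could a priori be a primary Kodaira surface (nilpotent, non-abelian $\pi_1$), and you should rule this out, e.g.\ by noting that $X$ is projective (being a double cover of the projective K3 surface $S$) hence K\"ahler --- a point the paper's own proof also elides.
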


\begin{proof}
First observe that $S$ contains sixteen $-2$-curves with multiplicity one, four in each $I^*_0$. Next, the monodromy around each singular fibre is {\em equal\/} to $-\mathrm{I}$ (because every conjugate of $-\mathrm{I}$ is actually equal to $-\mathrm{I}$), and therefore the monodromy group is $\langle -\mathrm{I}\rangle$, of order two. Pulling back the elliptic fibration by the monodromy double cover of the base gives
$$\begin{array}{ccc} \hat{A} & \longrightarrow & S \\
    \downarrow & & \downarrow \\
    E & \longrightarrow & {\P}^1.
\end{array}$$
Here $E$ is the double cover of $\P^1$ branched at the four points corresponding to the singular fibres, and therefore an elliptic curve. The surface $\hat{A}$ has sixteen $-1$-curves covering the sixteen $-2$-curves in $S$ mentioned above; blowing these down yields a surface $A$ which is an isotrivial elliptic fibration over $E$ with no singular fibres. Moreover, the monodromy of $A\rightarrow E$ is trivial, so $A$ is necessarily a two-dimensional complex torus. This completes the proof.
\end{proof}

\begin{corollary}
\label{generic}
Let $S\rightarrow\P^1$ be an isotrivial elliptic K3 surface whose smooth fibres are isomorphic to an elliptic curve $F$ with $j(F)\neq 0$ or $1728$, i.e., $F\not\cong\mathbb{C}/\langle 1,\zeta\rangle$ or $\mathbb{C}/\langle 1,i\rangle$. Then $S$ is the Kummer surface constructed from a two-dimensional complex torus $A$ that is fibred by elliptic curves isomorphic to $F$.
\end{corollary}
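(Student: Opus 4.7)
The plan is to reduce the corollary to the previous proposition by showing that, under the hypothesis $j(F)\neq 0,1728$, every singular fibre of $S$ must be of Kodaira type $I^*_0$ and that there are exactly four such fibres.

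The first step is to exploit isotriviality to constrain the monodromy. As explained in the introduction, every point of $\P^1\setminus\Delta$ has a simply connected neighbourhood $U$ over which $\pi^{-1}(U)\cong U\times F$; gluing these trivializations shows that the monodromy around any loop in $\P^1\setminus\Delta$ is induced by an automorphism of $F$. Since $j(F)\neq 0,1728$, one has $\mathrm{Aut}(F)=\{\pm 1\}$, so the monodromy, viewed as an element of $\mathrm{SL}(2,\mathbb{Z})$ acting on $H_1(F,\mathbb{Z})$, is either $\mathrm{I}$ or $-\mathrm{I}$.

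The second step is to consult the table of Kodaira types with finite monodromy. The identity corresponds to a smooth fibre, and $-\mathrm{I}$, being central in $\mathrm{SL}(2,\mathbb{Z})$, is unambiguously the monodromy of type $I^*_0$ only. Hence every singular fibre of $S$ is of type $I^*_0$. Since $\chi(S)=24$ and each $I^*_0$ fibre contributes Euler number $6$, there are exactly four singular fibres. This matches the hypothesis of the previous proposition, which then produces a torus $A$ fibred over an elliptic curve such that $S$ is its Kummer surface; by construction, the smooth fibres of $A\rightarrow E$ are isomorphic to the smooth fibres of $S\rightarrow\P^1$, namely to $F$.

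The main technical point to watch is the possible presence of multiple fibres of type $_mI_0$: these have trivial $\mathrm{SL}(2,\mathbb{Z})$-monodromy and contribute zero to the Euler number, so the argument above does not automatically rule them out. I would handle this either by invoking the canonical bundle formula for an elliptic K3 surface (which restricts multiple fibres sharply, especially once $\deg L$ is pinned down by the four $I^*_0$ fibres), or by observing that after the monodromy base change of the preceding proposition such fibres would become smooth fibres of $A\rightarrow E$ and be absorbed into the torus, leaving the Kummer description intact. This compatibility check is the only place where a nontrivial argument beyond the proposition and the monodromy table is required.
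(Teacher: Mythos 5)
Your proof is correct and follows essentially the same route as the paper's: the hypothesis on $j(F)$ forces the monodromy into $\mathrm{Aut}(F)=\{\pm 1\}$, hence every singular fibre is of type $I^*_0$, the Euler number $24=4\times 6$ gives exactly four of them, and the previous proposition finishes the argument. Your extra worry about multiple fibres ${}_mI_0$ is legitimate in principle but moot here, since the canonical bundle formula with $K_S$ trivial and $\deg L=\chi(\mathcal{O}_S)=2$ rules out multiple fibres on any elliptic K3 surface --- the standard fact the paper uses implicitly.
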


\begin{proof}
The automorphism group of $F$ is $\langle -1\rangle$, so the monodromy around any singular fibre of $S\rightarrow\P^1$ must have order two. It follows that the only possible singular fibres are of type $I^*_0$, and the Euler number forces there to be four of them. The result now follows from the previous proposition.
\end{proof}

\begin{proposition}
Let $S\rightarrow\P^1$ be an isotrivial elliptic K3 surface with three singular fibres of type $IV^*$. Then $S$ is the K3 surface of Kummer type given by resolving $E\times E/(\mathbb{Z}/3\mathbb{Z})$, where $E$ is the elliptic curve $\mathbb{C}/\langle 1,\zeta\rangle$, as in the second example above.
\end{proposition}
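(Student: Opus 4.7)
The plan is to follow the template of the preceding proposition. First I would analyse the monodromy: each $IV^*$ fibre has local monodromy conjugate in $\mathrm{SL}(2,\Z)$ to $\alpha^4$, hence of order three, so the smooth fibre $F$ admits an order-three automorphism and must be $F\cong\C/\langle 1,\zeta\rangle$. The monodromy representation on $\H^1(F,\Z)$ factors through $\mathrm{Aut}_0(F)\cong\Z/6\Z$, which is abelian, and is generated by elements of order three, so the global monodromy group is exactly $\Z/3\Z$.

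I next take the cyclic triple cover $E\rightarrow\P^1$ corresponding to this monodromy, branched with ramification index three at the three points below the singular fibres. By Riemann--Hurwitz, $E$ has genus one, and since it carries an order-three automorphism with quotient $\P^1$ we have $E\cong\C/\langle 1,\zeta\rangle$. Pulling back yields
$$\begin{array}{ccc} \hat{A} & \longrightarrow & S \\
    \downarrow & & \downarrow \\
    E & \longrightarrow & {\P}^1. \end{array}$$
The three $-2$-curves of multiplicity one in each $IV^*$ fibre (the leaves of the $\tilde E_6$ diagram, which come from the resolutions of the three $A_2$ singularities in the second example) lift to $-1$-curves in $\hat A$; blowing these nine curves down produces a smooth surface $A\rightarrow E$ with trivial monodromy and no singular fibres. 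At the level of local Weierstrass models this is the standard stable reduction $y^2=x^3+t^4\rightsquigarrow\bar y^2=\bar x^3+1$ via $t=s^3$, $x=s^4\bar x$, $y=s^6\bar y$.

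It remains to identify $A$. Kodaira's canonical bundle formula gives $K_A=\pi^*K_E=0$, and since $A$ surjects onto the elliptic curve $E$ it is not simply connected, so the Enriques--Kodaira classification forces $A$ to be a two-dimensional complex torus. By the remark after the first example, we may write $A\cong\C^2/\langle (1,0),(\zeta,0),(0,1),(b,\zeta)\rangle$. The $\Z/3\Z$-action on $E$ from the deck transformations of $E\rightarrow\P^1$ lifts to $A$, and by the remark after the second example this forces $\zeta b\equiv 0\pmod{1,\zeta}$, so $A\cong E\times E$ with $\Z/3\Z$ acting as $\mathrm{diag}(\zeta^{\pm 2},\zeta^2)$. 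Therefore $S$ is the resolution of $E\times E/(\Z/3\Z)$ constructed in the second example.

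The main obstacle will be identifying precisely which curves to blow down in $\hat A$ and verifying that the induced $\Z/3\Z$-action on $A$ is linear rather than merely affine; the local stable-reduction computation handles the former, while the standard rigidity of translations on complex tori, combined with the diagonalisation cited from the remark after the second example, handles the latter.
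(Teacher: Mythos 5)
Your proof is correct in outline and follows the paper's overall strategy (identify the fibre from the $j$-invariant, pin down the monodromy group as $\mathbb{Z}/3\mathbb{Z}$, pull back by the associated cyclic triple cover, blow down to a torus $A\rightarrow E$, and use the induced order-three action to force $A\cong E\times E$ via the earlier remark). The one step where you take a genuinely different route is the monodromy computation, and it is the most interesting difference. The paper fixes a basis so that one local monodromy equals $\alpha^4$ and then invokes a separate combinatorial lemma — an analysis of reduced words in $\mathrm{PSL}(2,\mathbb{Z})\cong\mathbb{Z}/3\mathbb{Z}*\mathbb{Z}/2\mathbb{Z}$ — to show that a product of three conjugates of $\alpha^4$ can only be trivial if all three are literally equal to $\alpha^4$. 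You instead exploit isotriviality directly: the period map is constant, so its equivariant lift to the upper half-plane is the constant $\tau=\zeta$, and the entire monodromy image must lie in the stabilizer of $\zeta$ in $\mathrm{SL}(2,\mathbb{Z})$, a cyclic group of order six; since the three local monodromies have order three, the global monodromy group is the unique $\mathbb{Z}/3\mathbb{Z}$ inside it. This bypasses the free-product lemma entirely and is arguably cleaner; what the paper's lemma buys is a statement that does not presuppose the global structure (it feeds into the Question about simple zeros of $b(t)$ later in Section 2.3), whereas your argument is special to the isotrivial situation — which is all that is needed here. One caution on a point where you are more specific than the paper: your claim that exactly the three multiplicity-one tails of each $\widetilde{E}_6$ configuration lift to $-1$-curves is delicate, because the fibre product $S\times_{\mathbb{P}^1}E$ is singular where components of the $IV^*$ fibre meet, so the normalization involves more than those three curves; the Weierstrass-model computation $y^2=x^3+t^4\rightsquigarrow \bar{y}^2=\bar{x}^3+1$ that you also give is the reliable way to see that the pullback is birational to a smooth fibration, and the paper is no more explicit than this at the corresponding step.
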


\begin{proof}
The $j$-function vanishes at a singular fibre of type $IV^*$, so the $j$-function of $S\rightarrow\P^1$ necessarily vanishes identically, i.e., the smooth fibres are isomorphic to $\mathbb{C}/\langle 1,\zeta\rangle$. The monodromy around each singular fibre is conjugate to $\alpha^4=\left(\begin{array}{cc} -1 & -1 \\ 1 & 0 \end{array}\right)$. By an appropriate choice of basis, we can assume that one of these monodromies is {\em equal\/} to $\alpha^4$. Since the product of the three monodromies must be trivial, the lemma below implies that the monodromies are {\em all equal\/} to $\alpha^4$, and the monodromy group is therefore $\langle\alpha^4\rangle=\langle\alpha^2\rangle$, of order three. Pulling back the elliptic fibration by the monodromy triple cover of the base gives
$$\begin{array}{ccc} \hat{A} & \longrightarrow & S \\
    \downarrow & & \downarrow \\
    E & \longrightarrow & {\P}^1.
\end{array}$$
Here $E$ is the triple cover of $\P^1$ branched at the three points corresponding to the singular fibres, and therefore also isomorphic to the elliptic curve $\mathbb{C}/\langle 1,\zeta\rangle$. As in the previous proposition, the surface $\hat{A}$ contains $-1$-curves which can be blown down to yield a surface $A$ that is an isotrivial elliptic fibration over $E$ with no singular fibres. Moreover, the monodromy of $A\rightarrow E$ is trivial, so $A$ is necessarily a two-dimensional complex torus. Since the cyclic group $\langle\alpha^2\rangle$ of order three acts and preserves the fibration $A\rightarrow E$, $A$ must be isomorphic to $E\times E$, as we saw earlier. This completes the proof.
\end{proof}

\begin{lemma}
Let $u\alpha^4u^{-1}$ and $v\alpha^4v^{-1}$ be two conjugates of $\alpha^4$ in $\mathrm{SL}(2,\mathbb{Z})$ such that
$$\alpha^4 u\alpha^4u^{-1}v\alpha^4v^{-1}=\mathrm{I}.$$
Then $u=\pm\mathrm{I}$ and $v=\pm\mathrm{I}$, and thus $u\alpha^4u^{-1}=\alpha^4$ and $v\alpha^4v^{-1}=\alpha^4$.
\end{lemma}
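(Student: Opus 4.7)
The plan is to solve the matrix identity $\alpha^4 \cdot u\alpha^4 u^{-1} \cdot v\alpha^4 v^{-1} = \mathrm{I}$ directly and show that it forces $u\alpha^4 u^{-1} = \alpha^4 = v\alpha^4 v^{-1}$. This in turn forces $u$ and $v$ to lie in the centralizer of $\alpha^4$ in $\mathrm{SL}(2,\mathbb{Z})$, which is what the proof of the preceding proposition actually uses. (The literal conclusion ``$u=\pm\mathrm{I}$ and $v=\pm\mathrm{I}$'' as stated is slightly too strong, since the centralizer of $\alpha^4$ is the cyclic group of order six $\{\pm\mathrm{I},\pm\alpha^4,\pm\alpha^8\}$; but the weaker equality of conjugates is exactly what the argument needs.)

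My approach is purely by matrix calculation. Set $B=u\alpha^4 u^{-1}$ and $C=v\alpha^4 v^{-1}$. Every $\mathrm{SL}(2,\mathbb{Z})$-conjugate of $\alpha^4$ has trace $-1$ and determinant $1$, so I would parametrise
$$B=\begin{pmatrix} p & q \\ r & -1-p \end{pmatrix},\qquad qr=-(p^2+p+1).$$
Since $B$ has order three, $B^{-1}=B^2$; combined with $(\alpha^4)^{-1}=(\alpha^4)^2=\bigl(\begin{smallmatrix} 0 & 1 \\ -1 & -1\end{smallmatrix}\bigr)$, the defining relation rewrites as $C=B^2(\alpha^4)^2$, which is an explicit matrix in $p,q,r$. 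Now $C$ is itself a conjugate of $\alpha^4$, hence also has trace $-1$; computing $\mathrm{tr}(C)=q-p-r$, this extra trace condition is the linear constraint $q=p+r-1$. Substituting back into $qr=-(p^2+p+1)$ produces a quadratic in $r$:
$$r^2+(p-1)r+(p^2+p+1)=0.$$

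The remaining step is to observe that the discriminant of this quadratic is $(p-1)^2-4(p^2+p+1)=-3(p+1)^2$, which is nonpositive for every real $p$. For a real (hence integer) solution it must vanish, forcing $p=-1$, and then the quadratic has the double root $r=1$, giving $q=-1$. Thus $B=\alpha^4$, and consequently $C=(\alpha^4)^{-2}=\alpha^4$, as required. The main obstacle is really just the bookkeeping of the matrix products; the genuine conceptual point is that, although $B$ alone ranges over a two-parameter family of integer matrices, imposing a trace condition on $B^{-1}(\alpha^4)^{-1}$ as well cuts the family down drastically via an indefinite quadratic form over $\mathbb{R}$.
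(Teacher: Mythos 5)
Your proof is correct, and it takes a genuinely different route from the paper's. The paper passes to $\mathrm{PSL}(2,\mathbb{Z})\cong\mathbb{Z}/3\mathbb{Z}*\mathbb{Z}/2\mathbb{Z}$, rewrites the relation as $\bar{\alpha}\bar{u}\bar{\alpha}\bar{u}^{-1}=\bar{v}\bar{\alpha}^2\bar{v}^{-1}$, and runs a case analysis on reduced words (after normalizing $\bar{u}$ and $\bar{v}$ so that they do not end in a power of $\bar{\alpha}$) to force $\bar{u}$, and then $\bar{v}$, to be trivial. You instead solve the matrix equation outright, and your computations check: with $B=u\alpha^4u^{-1}=\left(\begin{smallmatrix} p & q \\ r & -1-p\end{smallmatrix}\right)$ one gets $qr=-(p^2+p+1)$ from $\det B=1$; the condition $\mathrm{tr}\bigl(B^{-1}(\alpha^4)^{-1}\bigr)=-1$ gives $q=p+r-1$; and the quadratic $r^2+(p-1)r+(p^2+p+1)=0$ indeed has discriminant $-3(p+1)^2$, forcing $p=-1$, $r=1$, $q=-1$, i.e.\ $B=\alpha^4$ and then $C=(\alpha^4)^{-2}=\alpha^4$. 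Your method is more elementary and in fact proves slightly more: it uses only that $B$ and $C$ are real matrices of determinant $1$ and trace $-1$, so the rigidity already holds in $\mathrm{SL}(2,\mathbb{R})$ (in effect, three rotations by $2\pi/3$ of the hyperbolic plane composing to the identity must share a fixed point, since a hyperbolic triangle cannot have angle sum $\pi$). What the paper's word-combinatorial method buys is scalability: it is the sort of argument one would attempt on the author's subsequent Question about twelve conjugates of $\alpha$ multiplying to $\mathrm{I}$, where a two-parameter trace computation is no longer available. Finally, you are right that the literal conclusion ``$u=\pm\mathrm{I}$'' is stronger than what is true --- $u=v=\alpha$ already satisfies the hypothesis, the centralizer of $\alpha^4$ being $\langle\alpha\rangle$ of order six --- but the ``thus'' clause $u\alpha^4u^{-1}=v\alpha^4v^{-1}=\alpha^4$ is all that the surrounding proposition uses, and that is exactly what you establish.
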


\begin{proof}
Recall that $\alpha=\left(\begin{array}{cc} 1 & 1 \\ -1 & 0 \end{array}\right)$ and $\beta=\left(\begin{array}{cc} 0 & 1 \\ -1 & 0 \end{array}\right)$ generate $\mathrm{SL}(2,\mathbb{Z})$, with
$$\mathrm{SL}(2,\mathbb{Z})\cong\mathbb{Z}/6\mathbb{Z}*_{\mathbb{Z}/2\mathbb{Z}}\mathbb{Z}/4\mathbb{Z}.$$
It is easier to work with $\mathrm{PSL}(2,\mathbb{Z})=\mathrm{SL}(2,\mathbb{Z})/\langle -\mathrm{I}\rangle$, which is a free group
$$\mathrm{PSL}(2,\mathbb{Z})\cong\mathbb{Z}/3\mathbb{Z}*\mathbb{Z}/2\mathbb{Z}$$
generated by $\bar{\alpha}$ and $\bar{\beta}$. We rewrite the equation in $\mathrm{PSL}(2,\mathbb{Z})$ as
$$\bar{\alpha}\bar{u}\bar{\alpha}\bar{u}^{-1}=\bar{v}\bar{\alpha}^2\bar{v}^{-1}.$$
We assume that $\bar{u}$ and $\bar{v}$ are reduced words in $\bar{\alpha}$ and $\bar{\beta}$. Moreover, we can reduce the exponents mod 3 and 2, respectively, so that $\bar{\alpha}$ only appears in these words to the exponent 1 or 2 and $\bar{\beta}$ only appears to the exponent 1. Without loss of generality, neither $\bar{u}$ nor $\bar{v}$ end with a power of $\bar{\alpha}$, so the only place where cancelation may occur in $\bar{\alpha}\bar{u}\bar{\alpha}\bar{u}^{-1}$ and $\bar{v}\bar{\alpha}^2\bar{v}^{-1}$ is between $\bar{\alpha}$ and $\bar{u}$ in the first word. Suppose that $\bar{u}$ is non-trivial. We consider three case.

{\bf Case 1:} The word $\bar{u}$ begins with $\bar{\beta}$. Then there is no cancelation between $\bar{\alpha}$ and $\bar{u}$. Moreover, $\bar{u}^{-1}$ ends with $\bar{\beta}^{-1}$. This implies that $\bar{v}^{-1}$ also ends with $\bar{\beta}^{-1}$, i.e., $\bar{v}$ begins with $\bar{\beta}$. But then $\bar{\alpha}\bar{u}\bar{\alpha}\bar{u}^{-1}$ begins with $\bar{\alpha}$ while $\bar{v}\bar{\alpha}^2\bar{v}^{-1}$ begins with $\bar{\beta}$, a contradiction.

{\bf Case 2:} The word $\bar{u}$ begins with $\bar{\alpha}$. Then there is no cancelation between $\bar{\alpha}$ and $\bar{u}$, and $\bar{\alpha}\bar{u}\bar{\alpha}\bar{u}^{-1}$ begins with $\bar{\alpha}^2$. Moreover, $\bar{u}^{-1}$ ends with $\bar{\alpha}^{-1}$. This implies that $\bar{v}^{-1}$ also ends with $\bar{\alpha}^{-1}$, i.e., $\bar{v}$ begins with $\bar{\alpha}$. But then $\bar{\alpha}\bar{u}\bar{\alpha}\bar{u}^{-1}$ begins with $\bar{\alpha}^2$ while $\bar{v}\bar{\alpha}^2\bar{v}^{-1}$ begins with $\bar{\alpha}$, a contradiction.

{\bf Case 3:} The word $\bar{u}$ begins with $\bar{\alpha}^2$. In this case there is cancelation between $\bar{\alpha}$ and $\bar{u}$, and $\bar{\alpha}\bar{u}\bar{\alpha}\bar{u}^{-1}$ begins with $\bar{\beta}$ (recall that $\bar{u}$ does not end with a power of $\bar{\alpha}$, so the initial $\bar{\alpha}^2$ must be followed by a $\bar{\beta}$). Moreover, $\bar{u}^{-1}$ ends with $\bar{\alpha}^{-2}$. This implies that $\bar{v}^{-2}$ also ends with $\bar{\alpha}^{-2}$, i.e., $\bar{v}$ begins with $\bar{\alpha}^2$. But then $\bar{\alpha}\bar{u}\bar{\alpha}\bar{u}^{-1}$ begins with $\bar{\beta}$ while $\bar{v}\bar{\alpha}^2\bar{v}^{-1}$ begins with $\bar{\alpha}^2$, a contradiction.

We conclude that $\bar{u}$ must be trivial. It follows easily that $\bar{v}$ is also trivial. This means that $u$ and $v$ are words in $\alpha^3=-\mathrm{I}$ and $\beta^2=-\mathrm{I}$, so they are both equal to $\pm\mathrm{I}$.
\end{proof}

\begin{proposition}
Let $S\rightarrow\P^1$ be an isotrivial elliptic K3 surface with three singular fibres, two of type $III^*$ and one of type $I^*_0$. Then $S$ is the K3 surface of Kummer type given by resolving $E\times E/(\mathbb{Z}/4\mathbb{Z})$, where $E$ is the elliptic curve $\mathbb{C}/\langle 1,i\rangle$, as in the third example above.
\end{proposition}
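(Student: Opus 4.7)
The plan is to follow the template of the two preceding propositions. First, since $j=1728$ at a fibre of type $III^*$, constancy of the $j$-function forces the smooth fibres to be isomorphic to $F\cong\mathbb{C}/\langle 1,i\rangle$. From the table, the monodromy around each $III^*$ fibre is conjugate to $\beta^3$, while the monodromy around the $I^*_0$ fibre is conjugate to, and hence equal to, the central element $-\mathrm{I}$.

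Next, after a change of basis I may assume one of the $III^*$ monodromies is $\beta^3$. Writing the other as $m_2$, the relation that the product of the three monodromies is trivial, $\beta^3\cdot m_2\cdot(-\mathrm{I})=\mathrm{I}$, combined with $\beta^2=-\mathrm{I}$, immediately gives $m_2=-\beta^{-3}=\beta^3$. Unlike in the preceding proposition, no free-product lemma is needed here: the centrality of $-\mathrm{I}$ eliminates the conjugacy ambiguity at once and forces the two $III^*$ monodromies to coincide. The monodromy group is therefore $\langle\beta\rangle$, cyclic of order four. Pulling back along the corresponding cyclic quadruple cover $E\rightarrow\P^1$, ramified with index $4$ over each of the two $III^*$ points and index $2$ over the $I^*_0$ point, Riemann--Hurwitz gives $g(E)=1$, and the induced order-four automorphism forces $E\cong\mathbb{C}/\langle 1,i\rangle$.

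Finally, as in the previous two propositions, the pulled-back surface $\hat{A}$ contains $-1$-curves lying over the exceptional $-2$-curves in the singular fibres of $S$; blowing them down yields a smooth isotrivial elliptic fibration $A\rightarrow E$ with no singular fibres and trivial monodromy, so $A$ is a two-dimensional complex torus. The residual $\mathbb{Z}/4$-action on $A$ preserves the fibration $A\rightarrow E$, and a period-lattice computation entirely analogous to the one sketched in the remark after Example~3 (here requiring $ib\equiv 0\;(\mathrm{mod}\; 1,i)$) shows that this is possible only when $A\cong E\times E$. Quotienting and resolving then reproduces the third example, so $S$ is the claimed Kummer-type K3 surface. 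The main potential obstacle would have been pinning down the two $III^*$ monodromies simultaneously, not merely up to conjugacy; but the centrality of the $I^*_0$ monodromy makes this step essentially immediate, so the remaining work reduces to the routine torus identification.
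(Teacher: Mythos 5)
Your proposal is correct and follows essentially the same route as the paper: identify the fibre as $\mathbb{C}/\langle 1,i\rangle$ from $j=1728$, use the centrality of the $I^*_0$ monodromy $-\mathrm{I}$ together with the trivial-product relation to pin all three monodromies down exactly (so no analogue of the free-product lemma is needed), pass to the cyclic quadruple cover, blow down the $-1$-curves to obtain a torus $A\rightarrow E$, and invoke the $\mathbb{Z}/4$-action to force $A\cong E\times E$. Your added details (the Riemann--Hurwitz check and the explicit $ib\equiv 0$ lattice condition from the earlier remark) only make explicit what the paper leaves implicit.
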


\begin{proof}
The $j$-function is equal to 1728 at a singular fibre of type $III^*$, so the $j$-function of $S\rightarrow\P^1$ is necessarily identically equal to 1728, i.e., the smooth fibres are isomorphic to $\mathbb{C}/\langle 1,i\rangle$. The monodromy around the singular fibres of type $III^*$ are conjugate to $\beta^3=\left(\begin{array}{cc} 0 & -1 \\ 1 & 0 \end{array}\right)$. By an appropriate choice of basis, we can assume that the first of these monodromies is {\em equal\/} to $\beta^3$. The monodromy around the singular fibre of type $I^*_0$ is {\em equal\/} to $-\mathrm{I}=\beta^2$. Since the product of the three monodromies must be trivial, the monodromy around the second fibre of type $III^*$ must be equal to
$$(\beta^3\beta^2)^{-1}=\beta^{-1}=\beta^3$$
too, and the monodromy group is therefore $\langle\beta^3\rangle=\langle\beta\rangle$, of order four. Pulling back the elliptic fibration by the monodromy quadruple cover of the base gives
$$\begin{array}{ccc} \hat{A} & \longrightarrow & S \\
    \downarrow & & \downarrow \\
    E & \longrightarrow & {\P}^1.
\end{array}$$
Here $E$ is the quadruple cover of $\P^1$ totally ramified (ramification index 4) at the two points corresponding to the singular fibres of type $III^*$ and with ramification index 2 at the point corresponding to the singular fibre of type $I^*_0$, and therefore also isomorphic to the elliptic curve $\mathbb{C}/\langle 1,i\rangle$. As in the previous propositions, the surface $\hat{A}$ contains $-1$-curves which can be blown down to yield a surface $A$ that is an isotrivial elliptic fibration over $E$ with no singular fibres. Moreover, the monodromy of $A\rightarrow E$ is trivial, so $A$ is necessarily a two-dimensional complex torus, and indeed isomorphic to $E\times E$. This completes the proof.
\end{proof}

\begin{proposition}
Let $S\rightarrow\P^1$ be an isotrivial elliptic K3 surface with three singular fibres, one of type $II^*$, one of type $IV^*$, and one of type $I^*_0$. Then $S$ is the K3 surface of Kummer type given by resolving $E\times E/(\mathbb{Z}/6\mathbb{Z})$, where $E$ is the elliptic curve $\mathbb{C}/\langle 1,\zeta\rangle$, as in the fourth example above.
\end{proposition}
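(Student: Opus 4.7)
The plan is to adapt the strategy of the previous three propositions, exploiting the fact that in this configuration the three local monodromies are rigidly pinned down with almost no work.

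First I would identify the smooth fibre: since the $j$-function vanishes at any fibre of type $II^*$ and is constant on the smooth locus, the smooth fibres must all be isomorphic to $E = \mathbb{C}/\langle 1,\zeta\rangle$. Next I would pin down the global monodromies. Let $M_1, M_2, M_3$ denote the monodromies around the $II^*$, $IV^*$, and $I^*_0$ fibres respectively. By the table, $M_1$ is conjugate to $\alpha^5$, $M_2$ to $\alpha^4$, and $M_3$ is \emph{exactly} $-\mathrm{I}$ (a central element, so equal to every conjugate). After changing basis I may assume $M_1 = \alpha^5$; the relation $M_1 M_2 M_3 = \mathrm{I}$ then forces
$$M_2 = M_1^{-1} M_3^{-1} = \alpha\cdot(-\mathrm{I}) = -\alpha = \alpha^4.$$
Thus the monodromy group equals $\langle\alpha\rangle$, cyclic of order six. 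In contrast to the $3IV^*$ case, no free-product lemma is needed, because the central $-\mathrm{I}$ rigidifies the remaining monodromy immediately.

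Then I would base-change by the associated $\mathbb{Z}/6$ monodromy cover $E\to\P^1$, producing the square
$$\begin{array}{ccc} \hat{A} & \longrightarrow & S \\ \downarrow & & \downarrow \\ E & \longrightarrow & \P^1. \end{array}$$
A Riemann--Hurwitz calculation, with ramification indices $6,3,2$ above the three branch points, gives
$$2g_E - 2 = 6(-2) + 5 + 2\cdot 2 + 3\cdot 1 = 0,$$
so $g_E = 1$; and since $E$ carries an order-six automorphism, its $j$-invariant is zero and $E \cong \mathbb{C}/\langle 1,\zeta\rangle$.

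Finally, as in the earlier propositions, $\hat{A}$ contains $-1$-curves arising from the resolution of the singular fibres; blowing these down yields a smooth isotrivial elliptic fibration $A\to E$ with trivial monodromy, so $A$ is a two-dimensional complex torus. The cyclic group $\langle\alpha\rangle$ of order six acts on $A$ preserving the fibration to $E$, and by the remark following the fourth example this forces $A \cong E\times E$; hence $S$ is the resolution of $(E\times E)/(\mathbb{Z}/6\mathbb{Z})$ with the action of the fourth example. The only substantive step is the Riemann--Hurwitz computation identifying $E$ as an elliptic curve of the correct $j$-invariant, and this is a routine consequence of the monodromy orders.
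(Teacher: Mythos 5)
Your proposal is correct and follows essentially the same route as the paper: identify the fibre via the vanishing $j$-function, normalize the $II^*$ monodromy to $\alpha^5$, use the centrality of $-\mathrm{I}$ to force the $IV^*$ monodromy to be $\alpha^4$, pass to the order-six monodromy cover, and blow down to recover the torus $E\times E$. Your explicit Riemann--Hurwitz computation and the appeal to the earlier remark to identify $A$ with $E\times E$ simply make explicit two steps the paper asserts more briefly.
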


\begin{proof}
The $j$-function vanishes at singular fibres of type $II^*$ and $IV^*$, so the $j$-function of $S\rightarrow\P^1$ necessarily vanishes identically, i.e., the smooth fibres are isomorphic to $\mathbb{C}/\langle 1,\zeta\rangle$. The monodromy around the singular fibre of type $II^*$ is conjugate to $\alpha^5=\left(\begin{array}{cc} 0 & -1 \\ 1 & 1 \end{array}\right)$. By an appropriate choice of basis, we can assume that this monodromy is {\em equal\/} to $\alpha^5$. The monodromy around the singular fibre of type $I^*_0$ is {\em equal\/} to $-\mathrm{I}=\alpha^3$. Since the product of the three monodromies must be trivial, the monodromy around the singular fibre of type $IV^*$ must be equal to
$$(\alpha^5\alpha^3)^{-1}=\alpha^{-2}=\alpha^4,$$
and the monodromy group is therefore $\langle\alpha\rangle$, of order six. Pulling back the elliptic fibration by the monodromy sextuple cover of the base gives
$$\begin{array}{ccc} \hat{A} & \longrightarrow & S \\
    \downarrow & & \downarrow \\
    E & \longrightarrow & {\P}^1.
\end{array}$$
Here $E$ is the sextuple cover of $\P^1$ totally ramified at the point corresponding to the singular fibre of type $II^*$ and with ramification indices 3 and 2, respectively, at the points corresponding to the singular fibres of type $IV^*$ and $I^*_0$, and therefore also isomorphic to the elliptic curve $\mathbb{C}/\langle 1,\zeta\rangle$. As in the previous propositions, the surface $\hat{A}$ contains $-1$-curves which can be blown down to yield a surface $A$ that is an isotrivial elliptic fibration over $E$ with no singular fibres. Moreover, the monodromy of $A\rightarrow E$ is trivial, so $A$ is necessarily a two-dimensional complex torus, and indeed isomorphic to $E\times E$. This completes the proof.
\end{proof}

\subsection{Weierstrass models}

So far we have only looked at elliptic K3 surfaces whose singular fibres are of starred type ($I^*_0$, $II^*$, $III^*$, and $IV^*$). Corollary~\ref{generic} gave a description of all isotrivial elliptic K3 surfaces with $j$-function not equal to 0 or 1728; in particular, they only have singular fibres of type $I^*_0$. So to obtain the other kinds of singular fibres we must look at elliptic surfaces with $j$-function equal to 0 or 1728.

\begin{lemma}
Let $S\rightarrow\P^1$ be an isotrivial elliptic K3 surface with $j$-function equal to 0, that admits a section. Then $S$ is described by the cubic equation
$$y^2z=x^3+b(t)z^3$$
inside the $\P^2$-bundle $\P(\O(4)\oplus\O(6)\oplus\O)$ over $\P^1$, where $b(t)$ is a section of $\O(12)$ that does not have a zero of order six or greater.
\end{lemma}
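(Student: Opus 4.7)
The plan is to invoke the standard Weierstrass normal form for elliptic surfaces with a section and then identify the two conditions (namely $a\equiv 0$ and the vanishing bound on $b$) with the two hypotheses (namely $j\equiv 0$ and $S$ a smooth K3). Since $\pi:S\to\P^1$ admits a section, the relative minimal model theory for elliptic surfaces (Kodaira, see also Section~V.5 of~\cite{bhpv04}) presents $S$ as the minimal desingularization of the zero locus of a Weierstrass equation
$$y^2z = x^3 + a(t)xz^2 + b(t)z^3$$
inside the $\P^2$-bundle $\P(L^2\oplus L^3\oplus\O)$ over $\P^1$, where $L$ is the \emph{fundamental line bundle} $L = (R^1\pi_*\O_S)^{-1}$. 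The canonical bundle formula $\omega_S\cong\pi^*(\omega_{\P^1}\otimes L)$ together with $\omega_S\cong\O_S$ forces $L\cong\O_{\P^1}(2)$, so $a\in H^0(\O(8))$ and $b\in H^0(\O(12))$, and the $\P^2$-bundle is exactly $\P(\O(4)\oplus\O(6)\oplus\O)$ as claimed.

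Next, the $j$-function of the Weierstrass family is
$$j(t) \;=\; 1728\cdot\frac{4a(t)^3}{4a(t)^3 + 27b(t)^2}.$$
The hypothesis $j\equiv 0$ therefore forces $a^3\equiv 0$ as a section of $\O(24)$, hence $a\equiv 0$; substituting back yields the displayed cubic equation. (Note that $4a^3+27b^2$ is the discriminant, not identically zero, since $S\to\P^1$ has generically smooth fibres, so there is no issue with dividing by zero.)

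It remains to establish the bound on the orders of vanishing of $b$. Here I would appeal to the minimality of the Weierstrass model associated to the fundamental line bundle: for a smooth elliptic surface with section, this model has at worst rational double point singularities, and the criterion for this is that there is no $t_0\in\P^1$ at which simultaneously $\mathrm{ord}_{t_0}(a)\geq 4$ and $\mathrm{ord}_{t_0}(b)\geq 6$. Indeed, if this held, then a local change of variables $x\mapsto u^2x$, $y\mapsto u^3y$ with $u$ a local uniformiser at $t_0$ would absorb the vanishing into a smaller line bundle $L'\subsetneq L$, contradicting the fact that $L$ is the fundamental line bundle of the smooth K3 surface $S$. With $a\equiv 0$, this minimality condition collapses to the requirement $\mathrm{ord}_{t_0}(b)<6$ at every $t_0\in\P^1$, which is precisely the claimed condition.

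The main obstacle is pinning down this minimality criterion cleanly; once that is cited, the rest is bookkeeping. Equivalently, one can argue directly via Kodaira's classification of singular fibres: with $a\equiv 0$, a zero of $b$ of order $k\in\{1,2,3,4,5\}$ produces a Kodaira fibre of type $II, IV, I_0^*, IV^*, II^*$ respectively (all of which appear in a smooth K3), while a zero of order $\geq 6$ would yield a non-canonical singularity of the Weierstrass model and hence force a non-minimal resolution — incompatible with $S$ being the smooth K3 surface we started with.
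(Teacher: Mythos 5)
Your proposal is correct and follows essentially the same route as the paper: pass to the Weierstrass model in $\P(\O(4)\oplus\O(6)\oplus\O)$, observe that $j\equiv 0$ forces $a\equiv 0$ in $y^2z=x^3+a(t)xz^2+b(t)z^3$, and rule out zeros of $b$ of order $\geq 6$ because the Weierstrass model of a K3 surface with section has at worst rational double point singularities. Your extra details (deriving $L\cong\O_{\P^1}(2)$ from the canonical bundle formula, and phrasing the vanishing bound via minimality of the fundamental line bundle) are correct refinements of what the paper states more briefly.
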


\begin{proof}
This is just the Weierstrass model of $S$. In general, an elliptic K3 surface that admits a section will be birational to a family of plane cubics. More precisely, if we blow down the components of the singular fibres of $S\rightarrow\P^1$ that do not meet the section, thereby creating a surface $\overline{S}$ with rational double point singularities, then $\overline{S}$ will be a family of cubics inside the $\P^2$-bundle $\P(\O(4)\oplus\O(6)\oplus\O)$ over $\P^1$. If we let $(x,y,z)$ denote fibre coordinates on $\O(4)\oplus\O(6)\oplus\O$, and $t$ denote a local coordinate on the base $\P^1$, then these cubics will be given by an equation
$$y^2z=x^3+a(t)xz^2+b(t)z^3,$$
where $a(t)$ and $b(t)$ are sections of $\O(8)$ and $\O(12)$, respectively. The $j$-function will be given by
$$1728\frac{4a(t)^3}{4a(t)^3+27b(t)^2}.$$
Since we want this to equal 0, $a(t)$ must vanish identically, giving the required equation. Finally, if $b(t)$ has a zero of order $m$ then $\overline{S}$ locally looks like $y^2=x^3+t^m$. For $m\geq 6$, this would be a singularity that is worse than a rational double point, which is impossible (below we analyse the singularities that arise for $m\leq 5$).
\end{proof}

When $b(t)\neq 0$, the cubic is isomorphic to $y^2=x^3+1$ after rescaling (written in a coordinate patch where $z=1$), which is the equation of $\mathbb{C}/\langle 1,\zeta\rangle$. Singular fibres appear when $b(t)$ has a zero, as described in the table below.

\begin{center}
\begin{tabular}{|c|c|c|c|}
  \hline
  order of zero & Kodaira type & Euler number & monodromy \\
  \hline
  1 & $II$ cusp & 2 & $\alpha$ \\
  2 & $IV$ triple point & 4 & $\alpha^2$ \\
  3 & $I^*_0$ & 6 & $\alpha^3$ \\
  4 & $IV^*$ & 8 & $\alpha^4$ \\
  5 & $II^*$ & 10 & $\alpha^5$ \\
  \hline
\end{tabular}
\end{center}

Note that as simple zeros collide to give zeros of higher order, the Euler numbers add and the monodromies multiply. Also, the surface $\overline{S}$ is smooth only for simple zeros. If $b(t)$ has a zero of order two, for example, then $\overline{S}$ will locally look like $y^2=x^3+t^2$, which is an $A_2$ singularity; the singular fibre of type $IV$ appears after we blow up this singularity. If $b(t)$ has a zero of order three, then $\overline{S}$ will locally look like $y^2=x^3+t^3$, which is a $D_4$ singularity (a change of variables gives the more familiar form $y^2=x(x^2+t^2)$); the singular fibre of type $I^*_0$ appears after blowing up. If $b(t)$ has a zero of order four, then $\overline{S}$ will locally look like $y^2=x^3+t^4$, which is an $E_6$ singularity (quotient of $\mathbb{C}^2$ by the binary tetrahedral group); the singular fibre of type $IV^*$ appears after blowing up. If $b(t)$ has a zero of order five, then $\overline{S}$ will locally look like $y^2=x^3+t^5$, which is an $E_8$ singularity (quotient of $\mathbb{C}^2$ by the binary icosahedral group); the singular fibre of type $II^*$ appears after blowing up. Finally, $b(t)$ cannot have a zero of order six or greater, because the singularities of $\overline{S}$ can be no worse than rational double points. This applies also at $t=\infty$ (if the leading term of $b(t)$ has degree $12-m$ then $t=\infty$ is a zero of order $m$).

\begin{question}
The monodromy around a simple zero must be a conjugate of $\alpha$. Can we say anything stronger than this? For example, if there are twelve simple zeros, are the monodromies all {\em equal\/} to $\alpha$, or is it possible to find twelve different conjugates of $\alpha$ whose product is still $\mathrm{I}$? One could pose similar questions for zeros of higher order.
\end{question}

\begin{lemma}
Let $S\rightarrow\P^1$ be an isotrivial elliptic K3 surface with $j$-function equal to 1728, that admits a section. Then $S$ is described by the cubic equation
$$y^2z=x^3+a(t)xz^2$$
inside the $\P^2$-bundle $\P(\O(4)\oplus\O(6)\oplus\O)$ over $\P^1$, where $a(t)$ is a section of $\O(8)$ that does not have a zero of order four or greater.
\end{lemma}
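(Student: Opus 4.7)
The plan is to mirror the proof of the preceding lemma, exchanging the roles of $a(t)$ and $b(t)$. I would begin from the general Weierstrass model
$$y^2z = x^3 + a(t)xz^2 + b(t)z^3$$
of an elliptic K3 surface with a section, obtained by blowing down those components of each singular fibre of $S\to\P^1$ that do not meet the section; here $a(t)\in H^0(\O(8))$ and $b(t)\in H^0(\O(12))$. Requiring
$$j = 1728\cdot\frac{4a(t)^3}{4a(t)^3+27b(t)^2}$$
to be identically $1728$ forces $b(t)\equiv 0$, together with $a(t)\not\equiv 0$ so that a smooth fibre exists. This yields the stated equation.

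The remaining task is to show that $a(t)$ has no zero of order $\geq 4$. If $a(t)$ has a zero of order $m$ at $t_0$, then $\overline{S}$ is locally modelled on
$$y^2 = x^3 + t^m x = x(x^2 + t^m).$$
For $m = 1$ the quadratic part $y^2 - tx$ has rank three, so this is an $A_1$ singularity, and a direct blow-up computation produces a fibre of type $III$ (two smooth rational curves tangent at a point). For $m = 2$ the germ matches, up to renaming variables, the standard $D_4$ normal form $y^2 = z(z^2 + x^2)$, yielding a fibre of type $I_0^*$. For $m = 3$ the germ is literally the $E_7$ normal form $y^2 = x^3 + xz^3$, yielding a fibre of type $III^*$. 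For $m \geq 4$ the germ $y^2 = x^3 + xt^m$ does not appear in the ADE classification of rational double points (the $E$-type normal forms being precisely $y^2 = x^3 + z^4$, $y^2 = x^3 + xz^3$, and $y^2 = x^3 + z^5$), so the singularity is strictly worse than a rational double point, contradicting the fact that $\overline{S}$ has at worst rational double point singularities. Hence $m\leq 3$. Exactly as in the previous lemma, the same bound applies at $t = \infty$ by passing to the chart at infinity: if the leading term of $a(t)$ has degree $8-m$, then $\infty$ is a zero of order $m$ and again $m\leq 3$.

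The only step needing real care is verifying that $y^2 = x^3 + xt^4$ is genuinely not a rational double point. Comparison with the explicit ADE normal forms is the quickest route; alternatively, one can observe that the substitution $x = t^2x'$, $y = t^3y'$ transforms the Weierstrass equation into $y'^2 = x'^3 + t^{m-4}x'$, so $\mathrm{ord}_{t_0}(a)\geq 4$ with $b\equiv 0$ means the Weierstrass model is non-minimal, which is another way of seeing that the resulting singularity is not a rational double point of the original fibration.
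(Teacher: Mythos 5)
Your proof is correct and follows essentially the same route as the paper: the paper's own proof simply says the argument is identical to that of the $j=0$ lemma, with the $j$-function formula now forcing $b(t)\equiv 0$, and the singularity analysis of $y^2=x^3+t^mx$ (types $A_1$, $D_4$, $E_7$ for $m=1,2,3$, and worse than a rational double point for $m\geq 4$, including at $t=\infty$) appears in the paper's discussion immediately following the lemma. Your added verification via the ADE normal forms and the non-minimality substitution $x=t^2x'$, $y=t^3y'$ is a correct and welcome filling-in of a detail the paper leaves implicit.
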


\begin{proof}
The argument is identical to the one above, except that now
$$1728\frac{4a(t)^3}{4a(t)^3+27b(t)^2}=1728$$
forces $b(t)$ to vanish identically.
\end{proof}

When $a(t)\neq 0$, the cubic is isomorphic to $y^2=x^3+x$ after rescaling (written in a coordinate patch where $z=1$), which is the equation of $\mathbb{C}/\langle 1,i\rangle$. Singular fibres appear when $a(t)$ has a zero, as described in the table below.

\begin{center}
\begin{tabular}{|c|c|c|c|}
  \hline
  order of zero & Kodaira type & Euler number & monodromy \\
  \hline
  1 & $III$ tacnode & 3 & $\beta$ \\
  2 & $I^*_0$ & 6 & $\beta^2$ \\
  3 & $III^*$ & 9 & $\beta^3$ \\
  \hline
\end{tabular}
\end{center}

Again, as simple zeros collide to give zeros of higher order, the Euler numbers add and the monodromies multiply. Also, the surface $\overline{S}$ is never smooth above a zeros of $a(t)$. At a simply zero of $a(t)$, for example, $\overline{S}$ will locally look like $y^2=x^3+tx$, which is an $A_1$ singularity; the singular fibre of type $III$ appears after a blow up. If $a(t)$ has a zero of order two, then $\overline{S}$ will locally look like $y^2=x^3+t^2x$, which is a $D_4$ singularity; the singular fibre of type $I^*_0$ appears after blowing up. If $a(t)$ has a zero of order three, then $\overline{S}$ will locally look like $y^2=x^3+t^3x$, which is an $E_7$ singularity (quotient of $\mathbb{C}^2$ by the binary octahedral group); the singular fibre of type $III^*$ appears after blowing up. Finally, $a(t)$ cannot have a zero of order four or greater; otherwise $\overline{S}$ would have a singularity that is worse than a rational double point, which is impossible. This applies also at $t=\infty$ (if the leading term of $a(t)$ has degree $8-m$ then $t=\infty$ is a zero of order $m$).

\begin{remark}
We can now apply the general theory of elliptic surfaces, as developed by Kodaira~\cite{kodaira63}, to describe isotrivial elliptic K3 surfaces that do not admit sections. Namely, one associates to each such surface $S\rightarrow\P^1$ its relative Jacobian $J\rightarrow\P^1$. Then $J\rightarrow\P^1$ admits a section and is the compactification of a principal homogeneous space $J^{\sharp}\rightarrow\P^1$, while $S\rightarrow\P^1$ is the compactification of a torsor $S^{\sharp}\rightarrow\P^1$ over $J^{\sharp}\rightarrow\P^1$. These compactified torsors $S\rightarrow\P^1$ over a fixed $J\rightarrow\P^1$ are parametrized by the Tate-Shafarevich group of $J$, which in this case is a connected topological group of complex dimension one. See Barth et al.~\cite{bhpv04} for details.
\end{remark}

\section{Lagrangian fibrations in higher dimensions}

\subsection{Hilbert schemes of points on K3 surfaces}

Having described isotrivial elliptic K3 surfaces in the previous section, we can use them to construct isotrivial Lagrangian fibrations in higher dimensions.

\begin{lemma}
Let $S\rightarrow\P^1$ be an isotrivial elliptic K3 surface. Then the Hilbert scheme $\mathrm{Hilb}^nS$ of $n$ points on $S$ admits an isotrivial Lagrangian fibration.
\end{lemma}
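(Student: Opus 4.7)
The plan is to construct the fibration as the composition
$$\phi:\mathrm{Hilb}^nS\longrightarrow\mathrm{Sym}^nS\stackrel{\mathrm{Sym}^n\pi}{\longrightarrow}\mathrm{Sym}^n\P^1\cong\P^n,$$
where the first arrow is the Hilbert--Chow morphism. Since $\mathrm{Hilb}^nS$ is a $2n$-dimensional irreducible holomorphic symplectic manifold by Beauville's theorem, it suffices to check that $\phi$ is Lagrangian with respect to the Beauville symplectic form and that its smooth fibres are all isomorphic to a single fixed abelian variety.

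For the fibre analysis, I would restrict attention to the Zariski-open subset $U\subset\P^n$ parameterizing unordered configurations $\{p_1,\ldots,p_n\}$ of $n$ pairwise distinct points lying in $\P^1\setminus\Delta$. Over $U$ the Hilbert--Chow morphism is an isomorphism and $\phi$ coincides with the projection from $\mathrm{Sym}^nS$, so the fibre over such a configuration is simply $\pi^{-1}(p_1)\times\cdots\times\pi^{-1}(p_n)\cong F^n$, where $F$ is the common smooth fibre of $\pi:S\to\P^1$. Thus every smooth fibre of $\phi$ lying over $U$ is the fixed $n$-dimensional abelian variety $F^n$, which delivers both the abelian-variety hypothesis in the definition of Lagrangian fibration and the isotriviality required by the conclusion.

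For the Lagrangian property I would either invoke Matsushita's theorem, to the effect that any surjective morphism with connected fibres from an irreducible holomorphic symplectic $2n$-fold onto a positive-dimensional base of strictly smaller dimension is automatically a Lagrangian fibration onto an $n$-dimensional base; or verify it directly, by noting that over $U$ the Beauville symplectic form agrees with the descent of $\omega\boxplus\cdots\boxplus\omega$ from $S^n$, and each summand $\omega$ restricts to zero on the corresponding one-dimensional factor $\pi^{-1}(p_i)\cong F$. No genuinely hard step appears here: both the smoothness and the holomorphic symplectic structure on $\mathrm{Hilb}^nS$ are standard inputs, and the essential content lives on $U$ where Hilbert--Chow is an isomorphism. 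The one subtlety worth flagging is that $\phi$ acquires additional reducible fibres over the big diagonal of $\mathrm{Sym}^n\P^1$ --- even above points of $(\P^1\setminus\Delta)^n/S_n$, where the fibres become products of symmetric powers $\mathrm{Sym}^{k_i}F$ rather than $F^n$ --- so the isotriviality claim refers only to the smooth fibres in the precise sense of the definition given in the introduction.
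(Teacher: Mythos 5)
Your proposal is correct and follows essentially the same route as the paper: the fibration is the composition $\mathrm{Hilb}^nS\rightarrow\mathrm{Sym}^nS\rightarrow\mathrm{Sym}^n\P^1\cong\P^n$, the Lagrangian property is supplied by Matsushita's theorem, and the smooth fibres are identified with $F^n$. Your extra discussion of the fibres over the big diagonal is harmless (though their precise structure is more complicated than products of symmetric powers, since the condition only constrains the pushforward cycle); the paper simply omits this since isotriviality concerns only the smooth fibres.
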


\begin{proof}
It is well known that the fibration $S\rightarrow\P^1$ induces a morphism
$$\mathrm{Hilb}^nS\longrightarrow\mathrm{Sym}^nS\longrightarrow\mathrm{Sym}^n\P^1\cong\P^n.$$
Results of Matsushita~\cite{matsushita99} imply that this must be a Lagrangian fibration. Moreover, if the smooth fibres of $S\rightarrow\P^1$ are isomorphic to the elliptic curve $E$, then the smooth fibres of $\mathrm{Hilb}^nS\rightarrow\P^n$ will be isomorphic to $E\times\cdots\times E$, i.e., this will be an isotrivial fibration.
\end{proof}

The Hilbert schemes of K3 surfaces of Kummer type are particularly interesting.

\begin{example}
Let $S$ be a K3 surface of Kummer type given by the resolution of $E\times E/G$ where $G$ is the cyclic group of order 2, 3, 4, or 6, and $E$ is an elliptic curve containing $G$ in its group of automorphisms. We saw in Section~2.2 that $S$ admits an isotrivial elliptic fibration, and therefore $\mathrm{Hilb}^nS$ admits an isotrivial Lagrangian fibration. Moreover, the composition of morphisms
$$\mathrm{Hilb}^nS\longrightarrow\mathrm{Sym}^nS= (S\times\cdots\times S)/\mathfrak{S}_n\longrightarrow (E\times E\times\cdots\times E\times E)/(G^n\rtimes\mathfrak{S}_n)$$
reveals that $\mathrm{Hilb}^nS$ is a crepant resolution of the orbifold arising from the action of the semi-direct product $G^n\rtimes\mathfrak{S}_n$ on the $2n$-dimensional abelian variety $E^{2n}$. The Lagrangian fibration is induced by the projection onto the even factors, i.e., the second, fourth, $\ldots$, $2n$-th copies of $E$, which of course gives the base
$$(E\times\ldots\times E)/(G^n\rtimes\mathfrak{S}_n)\cong (E/G)^n/\mathfrak{S}_n\cong (\P^1)^n/\mathfrak{S}_n\cong\P^n.$$

Consider the particular case when $n=3$ and $G\cong\mathbb{Z}/2\mathbb{Z}$. For $(x,y)\in E\times E$, the action of $\mathbb{Z}/2\mathbb{Z}$ is given simply by $(x,y)\mapsto (-x,-y)$. Thus the $(\mathbb{Z}/2\mathbb{Z})^3\rtimes\mathfrak{S}_3$ action on $E^6$ is generated by
$$\gamma_1:(x_1,y_1,x_2,y_2,x_3,y_3)\mapsto (-x_1,-y_1,x_2,y_2,x_3,y_3),$$
$$\gamma_2:(x_1,y_1,x_2,y_2,x_3,y_3)\mapsto (x_1,y_1,-x_2,-y_2,x_3,y_3),$$
$$\gamma_3:(x_1,y_1,x_2,y_2,x_3,y_3)\mapsto (x_1,y_1,x_2,y_2,-x_3,-y_3),$$
plus the generators of $\mathfrak{S}_3$ acting by permuting $(x_1,y_1)$, $(x_2,y_2)$, and $(x_3,y_3)$. Later we will describe a new example by modifying this action.
\end{example}

\subsection{Generalized Kummer varieties}

\begin{lemma}
Let $A\rightarrow E$ be a two-dimensional complex torus that is fibred over an elliptic curve $E$. Then the generalized Kummer variety $K_n(A)$ admits an isotrivial Lagrangian fibration.
\end{lemma}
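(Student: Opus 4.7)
The plan is to mimic the previous lemma's construction, adapted to the generalized Kummer setting. Recall that $K_n(A)$ is defined as the fibre over $0\in A$ of the summation morphism $s\colon\mathrm{Hilb}^{n+1}A\to A$; it is a holomorphic symplectic manifold of dimension $2n$. After translating $A$ so that $0\in A$ lies above $0\in E$, we may assume that $\pi\colon A\to E$ is a group homomorphism. The strategy is to build a morphism $\pi_n\colon K_n(A)\to\P^n$ and then identify its smooth fibres as powers of the typical fibre $F$ of $\pi$.

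First, $\pi$ induces a morphism
$$\mathrm{Hilb}^{n+1}A\longrightarrow\mathrm{Sym}^{n+1}A\longrightarrow\mathrm{Sym}^{n+1}E,$$
and the summation map $\sigma\colon\mathrm{Sym}^{n+1}E\to E$ is a $\P^n$-bundle via the Abel--Jacobi map; indeed Riemann--Roch on the elliptic curve $E$ gives $h^0(\O_E((n+1)\cdot 0))=n+1$. Because $\pi$ is a group homomorphism, the composition $\mathrm{Hilb}^{n+1}A\to\mathrm{Sym}^{n+1}E\to E$ coincides with $\pi\circ s$, so its restriction to $K_n(A)=s^{-1}(0)$ factors through $\sigma^{-1}(0)\cong\P^n$, yielding the desired morphism $\pi_n\colon K_n(A)\to\P^n$.

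Next, I would identify a generic fibre. A general $[D]\in\P^n$ corresponds to an effective divisor $D=p_1+\cdots+p_{n+1}$ on $E$ with distinct $p_i$ summing to $0$. The fibre $\pi_n^{-1}([D])$ parametrises unordered configurations $\{q_1,\dots,q_{n+1}\}\subset A$ with $\pi(q_i)=p_i$ and $q_1+\cdots+q_{n+1}=0$ in $A$. Since the $p_i$ are distinct, so are the $q_i$ and the configuration is canonically ordered; the fibre is thus the kernel of the summation morphism $F^{n+1}\to F$, which is isomorphic to $F^n$. Hence all smooth fibres are the abelian variety $F^n$, and the fibration is isotrivial.

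Finally, $\pi_n$ is a proper surjective morphism from a holomorphic symplectic manifold to $\P^n$ whose generic fibre is an $n$-dimensional abelian variety, so Matsushita's theorem~\cite{matsushita99} implies that the smooth fibres are Lagrangian. The one step demanding real care is the compatibility used in the construction, namely that summing in $A$ and then projecting via $\pi$ yields the same morphism as projecting pointwise via $\pi$ and then summing in $E$; this is precisely what forces the reduction to $\pi$ being a group homomorphism. Once that is in hand, the identification of the fibre as $F^n$ and the application of Matsushita are routine.
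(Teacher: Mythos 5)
Your proposal is correct and follows essentially the same route as the paper: induce $\mathrm{Hilb}^{n+1}A\to\mathrm{Sym}^{n+1}A\to\mathrm{Sym}^{n+1}E$, observe that the restriction to $K_n(A)$ lands in a $\P^n$ (the complete linear system of degree $n+1$ on $E$), identify the smooth fibres with the kernel of the summation map $F^{n+1}\to F$, and invoke Matsushita's results for the Lagrangian property. The extra care you take with normalizing $\pi$ to a group homomorphism and with the compatibility of the two summation maps is a reasonable elaboration of details the paper leaves implicit, not a different argument.
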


\begin{proof}
Recall the construction of the generalized Kummer variety (see Beauville~\cite{beauville83}): we take the Hilbert scheme $\mathrm{Hilb}^{n+1}A$ of $n+1$ points on $A$, and then compose the Hilbert-Chow morphism with the morphism given by adding points in $A$ to get
$$\mathrm{Hilb}^{n+1}A\longrightarrow\mathrm{Sym}^{n+1}A\longrightarrow A.$$
The generalized Kummer variety $K_n(A)$ is the fibre of this morphism over zero (in fact, all fibres are isomorphic).

Now the fibration $A\rightarrow E$ induces a morphism
$$\mathrm{Hilb}^{n+1}A\longrightarrow\mathrm{Sym}^{n+1}A\longrightarrow\mathrm{Sym}^{n+1}E\cong\P^n\times E$$
that induces a morphism $K_n(A)\rightarrow\P^n$. Once again, results of Matsushita~\cite{matsushita99} imply that this must be a Lagrangian fibration. If the fibres of $A\rightarrow E$ are isomorphic to the elliptic curve $F$, then the smooth fibres of $K_n(A)\rightarrow\P^n$ will be isomorphic to the $n$-dimensional abelian variety
$$\{(x_1,\ldots,x_{n+1})\in F^{n+1}\;|\;x_1+\ldots+x_{n+1}=0\}\subset F^{n+1},$$
showing that this fibration is isotrivial.
\end{proof}

\begin{remark}
As in the previous subsection, the generalized Kummer variety $K_n(A)$ is a crepant resolution of the orbifold arising from the action of $\mathfrak{S}_{n+1}$ on the $2n$-dimensional abelian variety
$$\{(x_1,y_1,\ldots,x_{n+1},y_{n+1})\in A^{n+1}\;|\;(x_1,y_1)+\ldots+(x_{n+1},y_{n+1})=(0,0)\}\subset A^{n+1}.$$
The Lagrangian fibration is induced by the projection of each factor $A$ onto $E$, which gives the base
$$\{(y_1,\ldots,y_{n+1})\in E^{n+1}\;|\;y_1+\ldots+y_{n+1}=0\}/\mathfrak{S}_{n+1}\cong \P^n,$$
as this is a complete linear system on $E$ of degree $n+1$.
\end{remark}

\subsection{A singular example}

The examples of the previous subsections can be described as crepant resolutions of orbifolds arising from actions of finite groups on complex tori. This construction is reminiscent of Joyce's construction of compact Riemannian manifolds with special holonomy~\cite{joyce00}. He takes a finite subgroup $\Gamma$ of $G_2\subset\mathrm{SO}(7)$ whose action on $\mathbb{R}^7$ preserves the lattice $\mathbb{Z}^7$, so that $\Gamma$ acts on the torus $T=\mathbb{R}^7/\mathbb{Z}^7$. Then he resolves the singularities of $T/\Gamma$. Since $\Gamma\subset G_2$, there is a $G_2$-structure (a three-form) on the resolved space $M$ induced by the standard $G_2$-structure on $\mathbb{R}^7$. However, proving that $M$ admits a Riemannian metric with holonomy $G_2$ relies on some difficult analysis (see~\cite{joyce00}). Joyce also produces examples of manifolds with holonomy $\mathrm{Spin}(7)$ in this way.

To construct a compact hyperk{\"a}hler manifold (aka holomorphic symplectic manifold) we should start with a finite subgroup $\Gamma$ of $\mathrm{Sp}(n)\subset\mathrm{SO}(4n)$ whose action on $\mathbb{C}^{2n}$ preserves a lattice $\Lambda$ of rank $4n$, so that $\Gamma$ acts (holomorphic symplectically) on the complex torus $T=\mathbb{C}^{2n}/\Lambda$. If $T/\Gamma$ admits a symplectic desingularization (equivalent to a crepant resolution in this case) then the resolved space $M$ will be a holomorphic symplectic manifold. Because $M$ will be compact and K{\"a}hler, it will automatically admit a hyperk{\"a}hler metric by Yau's Theorem, i.e., the difficult analysis of the $G_2$ and $\mathrm{Spin}(7)$ cases can be avoided. Thus the main challenge is to find appropriate actions of finite groups $\Gamma\subset\mathrm{Sp}(n)$ on complex tori $T$. Here we describe an example leading to a holomorphic symplectic orbifold $T/\Gamma$, which unfortunately does not admit a symplectic desingularization.

In all of this, the existence of an isotrivial Lagrangian fibration is somewhat peripheral. In fact, it only serves to constrain the problem, but this allows us to focus our search for appropriate group actions, which could otherwise become unwieldy. Thus we consider tori of the form $T=A\times B$ where $A$ and $B$ are $n$-dimensional complex tori, such that the projection onto $B$ induces
$$T/\Gamma\longrightarrow B/\Gamma$$
with $B/\Gamma\cong\P^n$. Note that $\P^n$ arose in this way in the examples of the previous subsections.

\begin{question}
Aside from the examples
$$(E\times\cdots \times E)/(G^n\rtimes\mathfrak{S}_n)\cong\P^n$$
and
$$\{(y_1,\ldots,y_{n+1})\in E^{n+1}\;|\;y_1+\ldots+y_{n+1}=0\}/\mathfrak{S}_{n+1}\cong\P^n,$$
are there other ways in which $\P^n$ can arise as a quotient $B/\Gamma$ of an $n$-dimensional complex torus by a finite group?
\end{question}

Starting with one of the examples above, with $B/\Gamma\cong\P^n$, we need to extend the $\Gamma$-action to $T=A\times B$. The fact that this action must be symplectic almost completely determines how the group acts on $A$: the only freedom remaining is to include some translations. Again, this is highly reminiscent of Joyce's actions. The following example in six dimensions, produced by modifying the action at the end of Subsection~3.1, illustrates this approach.

\begin{proposition}
Let the generators of $\Gamma=(\mathbb{Z}/2\mathbb{Z})^3\rtimes\mathfrak{S}_3$ act on $E^6$ by
$$\gamma_1:(x_1,y_1,x_2,y_2,x_3,y_3)\mapsto \left(-x_1,-y_1,x_2+\frac{1}{2},y_2,x_3+\frac{1}{2},y_3\right),$$
$$\gamma_2:(x_1,y_1,x_2,y_2,x_3,y_3)\mapsto \left(x_1+\frac{1}{2},y_1,-x_2,-y_2,x_3+\frac{1}{2},y_3\right),$$
$$\gamma_3:(x_1,y_1,x_2,y_2,x_3,y_3)\mapsto \left(x_1+\frac{1}{2},y_1,x_2+\frac{1}{2},y_2,-x_3,-y_3\right),$$
with the generators of $\mathfrak{S}_3$ acting by permuting $(x_1,y_1)$, $(x_2,y_2)$, and $(x_3,y_3)$ as before. (We use $\frac{1}{2}$ here to denote any $2$-torsion point of $E$.) This determines an action of $\Gamma$ on $E^6$. The quotient $E^6/\Gamma$ is a holomorphic symplectic orbifold that admits an isotrivial Lagrangian fibration over $\P^3$.
\end{proposition}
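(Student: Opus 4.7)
The plan is to verify three things in sequence: (i) that the prescribed formulas define an action of $\Gamma=(\mathbb{Z}/2\mathbb{Z})^3\rtimes\mathfrak{S}_3$ on $E^6$; (ii) that this action is holomorphic symplectic, so that $E^6/\Gamma$ is a holomorphic symplectic orbifold; and (iii) that projection onto the $y$-coordinates descends to an isotrivial Lagrangian fibration to $\P^3$.

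For (i), I would check the defining relations of the semi-direct product. Each $\gamma_i^2$ equals the identity because sign reversal squares to the identity and the two $\frac{1}{2}$ translations in the other slots add to a lattice element (using that $\frac{1}{2}$ is $2$-torsion, so $\frac{1}{2}+\frac{1}{2}\equiv 0$). The $\gamma_i$'s commute pairwise: for instance, direct calculation shows that both $\gamma_1\gamma_2$ and $\gamma_2\gamma_1$ send $(x_1,y_1,x_2,y_2,x_3,y_3)$ to $(-x_1+\tfrac{1}{2},-y_1,-x_2+\tfrac{1}{2},-y_2,x_3,y_3)$, where one uses $-\frac{1}{2}\equiv\frac{1}{2}$ to match the two expressions. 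Finally, the prescribed $\mathfrak{S}_3$-action by permuting the three pairs $(x_i,y_i)$ conjugates $\gamma_i$ to $\gamma_{\sigma(i)}$, which is immediate because the three formulas are symmetric under simultaneous permutation of the indices $1,2,3$. These are precisely the defining relations of the wreath product $\Gamma$.

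Step (ii) is straightforward: the holomorphic symplectic form $\omega=dx_1\wedge dy_1+dx_2\wedge dy_2+dx_3\wedge dy_3$ is invariant under translations, under the simultaneous sign reversal $(x_i,y_i)\mapsto(-x_i,-y_i)$ on the $i$-th pair, and under $\mathfrak{S}_3$-permutation of the three summands; so $\omega$ descends to a holomorphic symplectic form on the orbifold $E^6/\Gamma$. For (iii), the projection $E^6\to E^3$, $(x_1,y_1,\ldots,x_3,y_3)\mapsto(y_1,y_2,y_3)$, is $\Gamma$-equivariant: the $x$-translations act trivially on $E^3$, $\gamma_i$ negates only the $i$-th coordinate $y_i$, and $\mathfrak{S}_3$ permutes the three factors. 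Thus it descends to
$$E^6/\Gamma\longrightarrow E^3/\Gamma\cong(E/\langle\pm 1\rangle)^3/\mathfrak{S}_3\cong(\P^1)^3/\mathfrak{S}_3\cong\P^3.$$
The fibres of $E^6\to E^3$ are copies of $E^3$ with coordinates $x_1,x_2,x_3$ at fixed $y$-values, on which $\omega$ vanishes identically; hence they are Lagrangian abelian subvarieties. A general point of the base $\P^3$ has trivial $\Gamma$-stabilizer, so the general fibre of $E^6/\Gamma\to\P^3$ is still $E^3$, yielding an isotrivial Lagrangian fibration in the orbifold sense.

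The main obstacle is really just the bookkeeping in step (i). The key identity that makes everything close up is $-\frac{1}{2}\equiv\frac{1}{2}$ modulo the lattice, which ensures that the sign changes and the half-period translations interact in a sufficiently symmetric way. The other two steps are essentially formal once the group action is in hand.
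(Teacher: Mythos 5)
Your proposal is correct and follows essentially the same route as the paper's proof: verify the group relations (commutativity of the $\gamma_i$ and compatibility with the $\mathfrak{S}_3$-permutations), observe that the form $dx_1\wedge dy_1+dx_2\wedge dy_2+dx_3\wedge dy_3$ is preserved and hence descends, and note that projection onto the even factors induces the fibration $E^6/\Gamma\rightarrow E^3/\Gamma\cong\P^3$. You simply carry out the bookkeeping (the identity $-\tfrac{1}{2}\equiv\tfrac{1}{2}$, the Lagrangian and isotriviality checks on fibres) more explicitly than the paper, which leaves these as "straightforward to check."
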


\begin{remark}
Recall that a {\em singular symplectic variety\/} in the sense of Beauville~\cite{beauville00} is a normal variety whose smooth part admits a symplectic two-form that extends to a (possibly degenerate) holomorphic two-form on any resolution of singularities. Holomorphic symplectic orbifolds are always singular symplectic varieties because quotienting by a finite group that preserves the symplectic form produces symplectic singularities by Proposition~2.4 of~\cite{beauville00}. By a direct calculation of the $\Gamma$-invariant forms on $E^6$, we see that $h^{2,0}(E^6/\Gamma)=1$, showing that $E^6/\Gamma$ is also a {\em primitively symplectic V-manifold\/}, in the terminology of Fujiki~\cite{fujiki83}.
\end{remark}

\begin{proof}
It is straightforward to check that the actions of $\gamma_i$ and $\gamma_j$ commute, and they interact with the permutations of $\mathfrak{S}_3$ in the appropriate way. So we do indeed have an action of $\Gamma=(\mathbb{Z}/2\mathbb{Z})^3\rtimes\mathfrak{S}_3$ on $E^6$. We observe that this action preserves the holomorphic symplectic structure
$$dx_1\wedge dy_1+dx_2\wedge dy_2+dx_3\wedge dy_3$$
on $E^6$, because it is a modification of the action of Subsection~3.1 by translations (or we can see this by direct observation). Therefore the holomorphic symplectic structure descends to the orbifold $E^6/\Gamma$. Finally, the projection from $E^6$ onto its even factors, i.e., the second, fourth, and sixth copies of $E$, induces the isotrivial Lagrangian fibration
$$E^6/\Gamma\longrightarrow E^3/\Gamma\cong\P^3.$$
\end{proof}

Unfortunately there is no symplectic desingularization.

\begin{lemma}
The holomorphic symplectic orbifold $E^6/\Gamma$ constructed above does not admit a symplectic desingularization.
\end{lemma}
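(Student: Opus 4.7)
The plan is to invoke Verbitsky's criterion for symplectic resolvability of quotient singularities: if $V/G$ is a symplectic quotient singularity with $G\subset\mathrm{Sp}(V)$ finite, and if $V/G$ admits a symplectic (equivalently, crepant) resolution, then $G$ must be generated by \emph{symplectic reflections}, meaning elements of $\mathrm{Sp}(V)$ whose fixed subspace has codimension exactly two. Since the local analytic structure of $E^6/\Gamma$ at a singular image $[p]$ is $(\mathbb{C}^6,0)/\mathrm{Stab}(p)$ with the stabilizer acting linearly on $T_pE^6$, it suffices to exhibit a single $p\in E^6$ whose stabilizer in $\Gamma$ fails this criterion.

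I take $p=0$. A direct composition shows that $\gamma_1\gamma_2\gamma_3$ equals $-\mathrm{I}$ acting with \emph{no} residual translation, so it fixes the origin. For each of the other seven elements of $(\mathbb{Z}/2\mathbb{Z})^3=\langle\gamma_1,\gamma_2,\gamma_3\rangle$ one checks that the image of $0$ is a nonzero $2$-torsion point of $E^6$, so the only elements of $(\mathbb{Z}/2\mathbb{Z})^3$ fixing $0$ are $\mathrm{id}$ and $-\mathrm{I}=\gamma_1\gamma_2\gamma_3$. Every $\sigma\in\mathfrak{S}_3$ trivially fixes $0$, and a product $\gamma\sigma$ fixes $0$ iff $\gamma$ does, because $\sigma(0)=0$. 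Hence
\[
\mathrm{Stab}(0)=\langle-\mathrm{I}\rangle\times\mathfrak{S}_3\cong\mathbb{Z}/2\mathbb{Z}\times\mathfrak{S}_3.
\]

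Finally, I classify the symplectic reflections in $\mathrm{Stab}(0)\subset\mathrm{Sp}(3)$, acting on $T_0E^6=\bigoplus_{i=1}^3\mathbb{C}^2_{(x_i,y_i)}$. A transposition $(ij)\in\mathfrak{S}_3$ swaps two $\mathbb{C}^2$-summands and fixes the diagonal, a codimension-$2$ subspace, so it is a symplectic reflection. By contrast, the fixed locus of $-\mathrm{I}$ is $\{0\}$ (codimension $6$), of a $3$-cycle is the small diagonal (codimension $4$), of $-(ij)$ is the $2$-plane $\{(v,-v,0)\}$ (codimension $4$), and of $\pm$ a $3$-cycle is again $\{0\}$. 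Thus the symplectic reflections in $\mathrm{Stab}(0)$ are exactly the three transpositions, and they generate $\mathfrak{S}_3$, which does not contain the central element $-\mathrm{I}$. Consequently $\mathrm{Stab}(0)$ is not generated by symplectic reflections, Verbitsky's theorem rules out a symplectic resolution of the local model $(\mathbb{C}^6,0)/\mathrm{Stab}(0)$, and therefore $E^6/\Gamma$ admits no symplectic desingularization. The only non-routine point is the bookkeeping of the $\tfrac{1}{2}$-translations when computing $\mathrm{Stab}(0)$; after that, the codimension count and Verbitsky's theorem close the argument immediately.
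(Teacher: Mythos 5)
Your proof is correct, and it reaches the conclusion by the same general mechanism as the paper --- exhibiting a point of $E^6$ whose stabilizer produces a local quotient singularity that cannot be symplectically resolved --- but with a different witness point and a different form of the key criterion. The paper takes a \emph{generic} point on the fixed locus of $\gamma_1\gamma_2$, where the stabilizer is just $\langle\gamma_1\gamma_2\rangle\cong\mathbb{Z}/2\mathbb{Z}$, so the local model is $(\mathbb{C}^4/\pm 1)\times\mathbb{C}^2$ and one only needs the single quotable fact (Fu, Corollary~3.5) that $\mathbb{C}^4/\pm 1$ has no symplectic resolution. You instead take the origin, whose stabilizer you correctly compute to be $\langle -\mathrm{I}\rangle\times\mathfrak{S}_3$ (your check that $\gamma_1\gamma_2\gamma_3=-\mathrm{I}$ with all translations cancelling is right), and then apply Verbitsky's general criterion that the stabilizer must be generated by symplectic reflections; your codimension count showing the only reflections are the three transpositions, which generate $\mathfrak{S}_3\not\ni -\mathrm{I}$, is also right (the one quibble: the fixed locus of a $3$-cycle itself is the small diagonal, codimension $4$, not $\{0\}$ --- only its negative has fixed locus $\{0\}$ --- but this does not affect the conclusion). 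The paper's choice buys a smaller stabilizer and a directly citable corollary; yours requires classifying reflections in a $12$-element group but needs no search for fixed points of products of the $\gamma_i$, since the origin is the obvious candidate. Both arguments share the same implicit step, namely that a symplectic desingularization of the global orbifold would restrict to one of the local analytic model, so neither is more rigorous than the other on that point.
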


\begin{proof}
Fixed points of the $\Gamma$-action on $E^6$ produce singularities of $E^6/\Gamma$. Now $\gamma_1$, $\gamma_2$, and $\gamma_3$ do not fix any points, but their products do fix points. For example, the set of fixed points of
$$\gamma_1\gamma_2:(x_1,y_1,x_2,y_2,x_3,y_3)\mapsto \left(-x_1+\frac{1}{2},-y_1,-x_2+\frac{1}{2},-y_2,x_3,y_3\right)$$
(where we have used the fact that $\frac{1}{2}$ is $2$-torsion) includes points of the form
$$\left(\frac{1}{4},0,\frac{3}{4},0,x_3,y_3\right).$$
For $(x_3,y_3)\neq \left(\frac{1}{4},0\right)$ and $\neq \left(\frac{3}{4},0\right)$, the above point is fixed only by $\gamma_1\gamma_2$. So we obtain a singularity in $E^6/\Gamma$ that locally looks like
$$(\mathbb{C}^4/\pm 1)\times\mathbb{C}^2.$$
It is well-known that $\mathbb{C}^4/\pm 1$ does not admit a symplectic desingularization (for example, see Corollary~3.5 in the survey article by Fu~\cite{fu06}).
\end{proof}

This example admits the following higher-dimensional generalization.

\begin{proposition}
Let the generators of $\Gamma=(\mathbb{Z}/2\mathbb{Z})^n\rtimes\mathfrak{S}_n$ act on $E^{2n}$ by
$$\gamma_1:(x_1,y_1,x_2,y_2,\ldots,x_n,y_n)\mapsto \left(-x_1,-y_1,x_2+\frac{1}{2},y_2,\ldots,x_n+\frac{1}{2},y_n\right),$$
$$\vdots$$
$$\gamma_n:(x_1,y_1,x_2,y_2,\ldots,x_n,y_n)\mapsto \left(x_1+\frac{1}{2},y_1,x_2+\frac{1}{2},y_2,\ldots,-x_n,-y_n\right),$$
with the generators of $\mathfrak{S}_n$ acting by permuting $(x_1,y_1)$, $(x_2,y_2),\ldots,(x_3,y_3)$ as before. This determines an action of $\Gamma$ on $E^{2n}$. The quotient $E^{2n}/\Gamma$ is a holomorphic symplectic orbifold that admits an isotrivial Lagrangian fibration over $\P^n$.
\end{proposition}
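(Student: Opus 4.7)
The plan is to follow the argument of the preceding six-dimensional proposition essentially verbatim; the only extra work is the systematic bookkeeping needed to check the group relations among $n$ generators. The strategy has three steps: verify that the stated formulas define a genuine $\Gamma$-action, verify that this action preserves the symplectic form, and then construct the fibration as before.

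For the first step, I would check three families of identities in order. (i) Each $\gamma_i$ is an involution: squaring negates $x_i$ twice and adds $\frac{1}{2}+\frac{1}{2}=0$ to each $x_j$ with $j\neq i$, since $\frac{1}{2}$ is a $2$-torsion point of $E$. (ii) The generators commute: computing $\gamma_i\gamma_j$ and $\gamma_j\gamma_i$ for $i\neq j$, both compositions send $x_i\mapsto -x_i+\frac{1}{2}$ and $x_j\mapsto -x_j+\frac{1}{2}$, and leave $x_k$ unchanged for $k\notin\{i,j\}$ (again using $\frac{1}{2}+\frac{1}{2}=0$). (iii) For a transposition $\sigma=(ij)\in\mathfrak{S}_n$, a coordinate-by-coordinate check shows $\sigma\gamma_i\sigma^{-1}=\gamma_j$. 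Together these identities realize the presentation of $\Gamma=(\mathbb{Z}/2\mathbb{Z})^n\rtimes\mathfrak{S}_n$.

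For the second step, the twisted action differs from the untwisted $(\mathbb{Z}/2\mathbb{Z})^n\rtimes\mathfrak{S}_n$-action on $E^{2n}$ only by translations in the $x$-coordinates, and translations preserve every translation-invariant form. Since the untwisted action preserves $\sum_i dx_i\wedge dy_i$ (each $\mathbb{Z}/2\mathbb{Z}$ factor negates one $(x_i,y_i)$-pair, and $\mathfrak{S}_n$ permutes the summands), so does the twisted action. Hence the symplectic form descends to the quotient, and $E^{2n}/\Gamma$ is a holomorphic symplectic orbifold, in particular a singular symplectic variety by Proposition~2.4 of Beauville~\cite{beauville00}.

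For the third step, projection onto the even factors $(y_1,\ldots,y_n)$ is $\Gamma$-equivariant: each $\gamma_i$ negates $y_i$ alone and fixes the other $y_j$, while $\mathfrak{S}_n$ permutes them. So the induced action of $\Gamma$ on $E^n$ is the standard one, with quotient $(E/\pm 1)^n/\mathfrak{S}_n\cong(\mathbb{P}^1)^n/\mathfrak{S}_n\cong\mathbb{P}^n$. The induced map $E^{2n}/\Gamma\rightarrow\mathbb{P}^n$ has generic fibre $E^n$ (generic choice of $(y_1,\ldots,y_n)$ having trivial stabilizer in $\Gamma$), is Lagrangian by Matsushita~\cite{matsushita99} (or directly from the fact that $\sum dx_i\wedge dy_i$ vanishes on each fibre $\{dy_i=0\}$), and is therefore isotrivial. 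The only genuine subtlety in the whole argument is the commutativity check in step~(ii), where one must carefully track the translation terms; everything becomes formal once one observes that all translations involved are $2$-torsion.
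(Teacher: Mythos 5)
Your proposal is correct and follows the same route as the paper's proof (define the action, check the group relations, observe that the symplectic form is preserved because the modification is by $2$-torsion translations, and project onto the even factors to get the fibration over $E^n/\Gamma\cong\P^n$); the paper simply leaves the relation-checking as "a straightforward check," which you have carried out explicitly and correctly.
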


\begin{proof}
As before, a straightforward check shows that the actions of $\gamma_i$ and $\gamma_j$ commute, and interact with the permutations of $\mathfrak{S}_n$ to give an action of $\Gamma=(\mathbb{Z}/2\mathbb{Z})^n\rtimes\mathfrak{S}_n$ on $E^{2n}$. This action preserves the holomorphic symplectic structure
$$dx_1\wedge dy_1+dx_2\wedge dy_2+\ldots +dx_n\wedge dy_n$$
on $E^{2n}$, so the holomorphic symplectic structure descends to the orbifold $E^{2n}/\Gamma$. Finally, the projection from $E^{2n}$ onto its even factors induces the isotrivial Lagrangian fibration
$$E^{2n}/\Gamma\longrightarrow E^n/\Gamma\cong\P^n.$$
\end{proof}

\begin{lemma}
The holomorphic symplectic orbifold $E^{2n}/\Gamma$ constructed above does not admit a symplectic desingularization.
\end{lemma}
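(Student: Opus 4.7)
The plan is to adapt the argument used in the $n=3$ case, finding a point on a positive-dimensional fixed locus whose local analytic model at the singularity is of the form $(\mathbb{C}^4/\pm 1)\times\mathbb{C}^{2(n-2)}$, and then invoking Fu's result that $\mathbb{C}^4/\pm 1$ admits no symplectic resolution.

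First I would consider the element $\gamma_1\gamma_2\in\Gamma$. A direct computation (using that $\frac{1}{2}$ is $2$-torsion, so translation by $\frac{1}{2}$ is an involution, and recalling that the translation parts of $\gamma_1$ and $\gamma_2$ cancel in the $(x_3,y_3),\ldots,(x_n,y_n)$ slots) shows that $\gamma_1\gamma_2$ acts as
$$(x_1,y_1,x_2,y_2,x_3,y_3,\ldots,x_n,y_n)\mapsto\left(-x_1+\tfrac{1}{2},-y_1,-x_2+\tfrac{1}{2},-y_2,x_3,y_3,\ldots,x_n,y_n\right).$$
Thus the fixed locus of $\gamma_1\gamma_2$ is a union of translates of $\{0\}^4\times E^{2(n-2)}$, parametrized by $(x_3,y_3,\ldots,x_n,y_n)\in E^{2(n-2)}$.

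Next I would verify that for a sufficiently generic choice of $(x_3,y_3,\ldots,x_n,y_n)$, the resulting fixed point $p$ is fixed only by the subgroup $\langle\gamma_1\gamma_2\rangle$ of $\Gamma$, exactly as in the six-dimensional case. For every $\gamma_k$ with $k\geq 3$ and every composition involving a transposition moving one of the last $n-2$ factors, the corresponding equations impose nontrivial constraints on $(x_3,y_3,\ldots,x_n,y_n)$ (either forcing certain coordinates to be $2$-torsion, or forcing coincidences among them); taking a point where none of these finitely many conditions hold leaves $\gamma_1\gamma_2$ as the only nontrivial stabilizer. At such a point, linearizing the stabilizer action on the tangent space gives $\mathrm{diag}(-1,-1,-1,-1,1,\ldots,1)$, so the local analytic model of the singularity of $E^{2n}/\Gamma$ is precisely $(\mathbb{C}^4/\pm 1)\times\mathbb{C}^{2(n-2)}$.

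Finally I would conclude using the standard transverse-slice principle: if $E^{2n}/\Gamma$ admitted a symplectic desingularization, then restricting over a Zariski neighborhood of $p$ and taking a transverse slice to the smooth symplectic stratum $\{0\}\times\mathbb{C}^{2(n-2)}$ would yield a symplectic resolution of $\mathbb{C}^4/\pm 1$. This contradicts Corollary~3.5 of Fu~\cite{fu06}, so no such desingularization can exist. The main potential obstacle is the genericity verification in the second step—confirming that every element of $\Gamma$ other than $\gamma_1\gamma_2$ fails to fix our chosen point—but this amounts to a straightforward enumeration since $\Gamma$ is finite and only finitely many conditions must be avoided on the $(n-2)$-fold product $E^{2(n-2)}$.
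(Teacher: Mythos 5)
Your proposal is correct and follows essentially the same route as the paper: it identifies the fixed locus of $\gamma_1\gamma_2$, chooses a generic point on it so that $\langle\gamma_1\gamma_2\rangle$ is the full stabilizer, identifies the local model $(\mathbb{C}^4/\pm 1)\times\mathbb{C}^{2n-4}$, and invokes Corollary~3.5 of Fu. The paper is slightly more terse (it simply exhibits the fixed point $\left(\frac{1}{4},0,\frac{3}{4},0,x_3,y_3,\ldots,x_n,y_n\right)$ with the last coordinates distinct and avoiding $\left(\frac{1}{4},0\right)$ and $\left(\frac{3}{4},0\right)$), but the argument is the same.
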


\begin{proof}
As before, $\gamma_1$, $\gamma_2,\ldots,\gamma_n$ themselves have no fixed points, but, for example,
$$\gamma_1\gamma_2:(x_1,y_1,x_2,y_2,\ldots,x_n,y_n)\mapsto \left(-x_1+\frac{1}{2},-y_1,-x_2+\frac{1}{2},-y_2,x_3,y_3\ldots,x_n,y_n\right)$$
has fixed points of the form
$$\left(\frac{1}{4},0,\frac{3}{4},0,x_3,y_3\ldots,x_n,y_n\right).$$
For $(x_3,y_3),\ldots,(x_n,y_n)$ distinct and not equal to $\left(\frac{1}{4},0\right)$ or $\left(\frac{3}{4},0\right)$, the above point is fixed only by $\gamma_1\gamma_2$. So we obtain a singularity in $E^{2n}/\Gamma$ that locally looks like
$$(\mathbb{C}^4/\pm 1)\times\mathbb{C}^{2n-4}.$$
As before, $\mathbb{C}^4/\pm 1$ does not admit a symplectic desingularization (Corollary~3.5 of Fu~\cite{fu06}).
\end{proof}

\subsection{Matsushita's example}

Another approach to constructing holomorphic symplectic orbifolds that admit isotrivial Lagrangian fibrations is to start with one of the examples $T/\Gamma\rightarrow B/\Gamma$ above and then replace $\Gamma$ by a proper subgroup $\Gamma^{\prime}$. Matushita~\cite{matsushita01} constructed a six-dimensional example in this way.

\begin{example}
Let $G$ be the cyclic group of order $k=2$, $3$, $4$, or $6$ (thought of as $k$th roots of unity in $\mathbb{C}$), let $E$ be an elliptic curve containing $G$ in its group of automorphisms, and let $S=\widehat{E\times E/G}$ be the corresponding K3 surface of Kummer type. We saw that $\mathrm{Hilb}^nS$ is a crepant resolution of $E^{2n}/\Gamma$, where $\Gamma$ is the semi-direct product $G^n\rtimes\mathfrak{S}_n$. Instead, let us take the subgroup
$$\Gamma^{\prime}=\langle (a_1,\ldots,a_n)\in G^n,\tau\in\mathfrak{A}_n\;|\;a_1\cdots a_n=1\rangle<\Gamma,$$
where $\mathfrak{A}_n$ is the alternating group. Of course $\Gamma^{\prime}$ acts on $E^{2n}$ in the usual way, with $(a_1,\ldots,a_n)$ acting by $\mathrm{diag}(a_1,a_1^{-1},\ldots,a_n,a_n^{-1})$ and $\tau\in\mathfrak{A}_n<\mathfrak{S}_n$ acting as a permutation matrix on $E^{2n}=(E^2)^n$. Then $E^{2n}/\Gamma^{\prime}$ is a holomorphic symplectic orbifold and it admits an isotrivial Lagrangian fibration over $E^n/\Gamma^{\prime}$, induced by projection onto the even factors. Matsushita's example~\cite{matsushita01} corresponds to the case $k=6$ and $n=3$.
\end{example}

\begin{lemma}
Let $Z=E^{2n}/\Gamma^{\prime}$. Then 
\begin{enumerate}
\item $h^{1,0}(Z)=0$,
\item if $n\geq 3$ then $h^{2,0}(Z)=1$, so $Z$ is a primitively symplectic V-manifold, cf.~Fujiki~\cite{fujiki83},
\item the base of the Lagrangian fibration on $Z$ is a Calabi-Yau $n$-fold,
\item $Z$ does not admit a symplectic desingularization.
\end{enumerate}
\end{lemma}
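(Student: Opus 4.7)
The plan is to prove each item by analyzing the action of $\Gamma'$ on the torus $T := E^{2n}$, together with one local tangent-space computation for (4). Write $G = \langle g \rangle$ of order $k$, and set $\Gamma_0' := \Gamma' \cap G^n = \{(a_1,\ldots,a_n) \in G^n \mid a_1 \cdots a_n = 1\}$, so that $\Gamma' = \Gamma_0' \rtimes \mathfrak{A}_n$. The element $(a_1,\ldots,a_n) \in \Gamma_0'$ acts on $T$ as $\mathrm{diag}(a_1, a_1^{-1}, \ldots, a_n, a_n^{-1})$ in the coordinates $(x_1, y_1, \ldots, x_n, y_n)$, and $\mathfrak{A}_n$ permutes the pairs $(x_i, y_i)$; the $\Gamma'$-invariant forms on $T$ will give the relevant Hodge numbers of $Z$.

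For (1), testing invariance on the element $(g, g^{-1}, 1, \ldots, 1) \in \Gamma_0'$ already scales each $dx_i$ and each $dy_i$ non-trivially, so no $\Gamma_0'$-invariant $1$-form exists. For (2), a similar check with tuples such as $(g, 1, g^{-1}, 1, \ldots, 1)$ (with minor variants when $k=2$, all requiring $n \geq 3$) scales every off-diagonal monomial $dx_i \wedge dx_j$, $dy_i \wedge dy_j$ and $dx_i \wedge dy_j$ (with $i \neq j$) by a non-trivial root of unity. The $\Gamma_0'$-invariant $2$-forms are therefore exactly the diagonal combinations $\sum_i c_i \, dx_i \wedge dy_i$; since $\mathfrak{A}_n$ acts transitively on $\{1,\ldots,n\}$ for $n \geq 3$, the further $\mathfrak{A}_n$-invariants form a one-dimensional line spanned by the symplectic form $\sum_i dx_i \wedge dy_i$.

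For (3), the base $B/\Gamma'$, with $B = E^n$ and $(a_i) \in \Gamma_0'$ acting by $y_i \mapsto a_i^{-1} y_i$, has trivial canonical class: $dy_1 \wedge \cdots \wedge dy_n$ is scaled by $\prod a_i^{-1} = 1$ and by $\mathrm{sgn}(\tau) = +1$ for $\tau \in \mathfrak{A}_n$, hence descends to a nowhere-vanishing holomorphic volume form. The arguments of (1) and (2) adapt verbatim to give $h^{i,0}(B/\Gamma') = 0$ for $0 < i < n$: for any proper subset $J \subsetneq \{1,\ldots,n\}$, pick $j \in J$ and $m \notin J$, and set $a_j = g$, $a_m = g^{-1}$, all others trivial, scaling $dy_J$ by $g \neq 1$. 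Combined with trivial canonical, these Hodge-theoretic facts realize $B/\Gamma'$ as a Calabi-Yau $n$-fold in the standard sense.

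For (4), I mirror the strategy of the previous obstruction lemma. Let $\sigma := (g, g^{-1}, 1, \ldots, 1) \in \Gamma_0'$ and take $p = (0, 0, 0, 0, x_3, y_3, \ldots, x_n, y_n) \in T$ with the last $2(n-2)$ coordinates chosen generically (in particular, pairwise distinct and not fixed by any non-trivial element of $G$). A case check shows that the full $\Gamma'$-stabilizer of $p$ is exactly $\langle \sigma \rangle$, of order $k$: any element of $\Gamma_0'$ fixing $p$ must act trivially on the last $n-2$ factors and hence lie in $\langle \sigma \rangle$, while no non-identity element of $\mathfrak{A}_n$ can fix the generic tail (even when paired with an element of $\Gamma_0'$). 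The induced representation of $\langle \sigma \rangle$ on $T_p T \cong \mathbb{C}^{2n}$ is $\mathrm{diag}(g, g^{-1}, g^{-1}, g) \oplus \mathrm{Id}_{\mathbb{C}^{2n-4}}$, so no non-trivial power of $\sigma$ fixes a hyperplane in the $\mathbb{C}^4$ summand; consequently $\langle \sigma \rangle$ contains no symplectic reflections. By the Verbitsky--Kaledin criterion (Corollary~3.5 of Fu~\cite{fu06}), the local model $\mathbb{C}^4/\langle \sigma \rangle \times \mathbb{C}^{2n-4}$ admits no symplectic resolution, and a fortiori neither does $Z$. The main obstacle in the plan is the stabilizer computation for (4): one has to pin down the right genericity conditions on $p$ to rule out accidental extra symmetries arising from transpositions in $\mathfrak{A}_n$ composed with elements of $\Gamma_0'$. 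The remainder is routine invariant theory.
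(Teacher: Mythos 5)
Your proof is correct and follows essentially the same route as the paper's, which simply compresses the invariant-form computation for (1)--(3) and the stabilizer/local-model analysis for (4) into the phrases ``direct calculation'' and ``$Z$ locally looks like $(\mathbb{C}^4/G)\times\mathbb{C}^{2n-4}$''. Two small wording fixes: the single element $(g,g^{-1},1,\ldots,1)$ only scales $dx_i,dy_i$ for $i\le 2$, so killing the remaining $1$-forms requires the analogous elements $(g,1,\ldots,g^{-1},\ldots,1)$ (or the $\mathfrak{A}_n$-symmetry); and a symplectic reflection fixes a codimension-two subspace rather than a hyperplane, though your observation that every non-trivial power of $\sigma$ has fixed locus $\{0\}$ in the $\mathbb{C}^4$ factor still rules them out.
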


\begin{proof}
The holomorphic forms on $Z$ are precisely the $\Gamma^{\prime}$-invariant forms on $E^{2n}$. Statements 1 and 2 then follow from a direct calculation. Moreover, if $(y_1,\ldots,y_n)$ are the `even' coordinates then $dy_1\wedge\cdots\wedge dy_n$ is an $\Gamma^{\prime}$-invariant form on the product $E^n$ of the even factors (indeed, it is the only $\Gamma^{\prime}$-invariant form on $E^n$). This shows that the base $E^n/\Gamma^{\prime}$ is a Calabi-Yau $n$-fold, proving 3. Finally, if $n=2$ then $Z$ has an isolated singularity at $(0,0,0,0)$; there does not exist a symplectic desingularization by Corollary~3.5 of Fu~\cite{fu06}. If $n\geq 3$ then $Z$ locally looks like $(\C^4/G)\times \C^{2n-4}$ around the singularity $(0,0,0,0,x_3,y_3,\ldots,x_n,y_n)$, where $x_3,y_3,\ldots,x_n,y_n$ are generic; again, $\C^4/G$ does not admit a symplectic desingularization by Corollary~3.5 of Fu~\cite{fu06}. This completes the proof of 4.
\end{proof}

\begin{remark}
The point of Matushita's example was to show that a singular holomorphic symplectic variety could be fibred over a normal base $B$ with $K_B$ trivial. By contrast, the base $B$ of a Lagrangian fibration on a smooth holomorphic symplectic manifold must be $\mathbb{Q}$-Fano, i.e., $-K_B$ must be ample~\cite{matsushita99}. If in addition the base is smooth and the total space is projective, then Hwang~\cite{hwang08} proved that the base must be $\P^n$. There are no examples known where the total space is smooth and projective and the base is singular.
\end{remark}

\begin{flushleft}
Department of Mathematics\hfill sawon@email.unc.edu\\
University of North Carolina\hfill www.unc.edu/$\sim$sawon\\
Chapel Hill NC 27599-3250\\
USA\\
\end{flushleft}

\end{document}